\newtheorem{theorem}{Theorem}[section]
\newtheorem{lemma}[theorem]{Lemma}
\newtheorem{corollary}[theorem]{Corollary}
\theoremstyle{definition}
\theoremstyle{remark}
\newtheorem{remark}[theorem]{Remark}
\numberwithin{equation}{section}
\theoremstyle{claim}
\newcommand{\R}{{\mathbb R}}
\newcommand{\C}{{\mathcal{C}}}
\author{Mark Allen}
\address{Department of Mathematics, Brigham Young University, Provo,  UT 84602, US}
\author{Henrik Shahgholian}
\address{Department of Mathematics, KTH  Royal Institute of Technology, Stockholm, Sweden}
\thanks{Much of this work was completed while the first author visited KTH Royal Institute of Technology. Shahgholian was supported by Swedish Research Council.}
\title[Boundary Harnack]{A New Boundary Harnack Principle \\  (equations with right hand side)}
\begin{document}

\begin{abstract}
We introduce a new boundary Harnack principle in Lipschitz domains for equations with a right hand side. 
Our approach, which uses comparisons and blow-ups,  will  adapt to more general domains as well as other types of operators. We prove the principle for 
divergence form elliptic equations with lower order terms including zero order terms. The inclusion of a zero order term appears to be new even in the absence of a right hand side.   
\end{abstract}

\maketitle
\makeatletter
\vspace{-2em}
{\centering\enddoc@text}
\let\enddoc@text\empty 
\makeatother

\section{Introduction}
 
 \subsection{Background}
The well-known  boundary  Harnack principle states that two non-negative harmonic functions are comparable close to part of the boundary of a given domain, where they both vanish. More exactly if $u$ and $v$ are harmonic functions  in $D \cap B_1 $ and vanish on $\partial D \cap B_1$, with $D$ a Lipschitz  domain,\footnote{The boundary Harnack Principle holds in very general domains, such as NTA domains, and uniform domains. It also holds for solutions to a large class of elliptic equations.} then 
$$
\frac1C v(x) \leq u(x) \leq C v(x)  , 
$$      
where $C$ depends on space dimension and $u(x^0)/v(x^0)$ for a fixed $x^0$ in the domain.

We are interested in  extending this result to the case of equations with right hand side.
Of course such a general result is doomed to fail, unless some further restrictions are imposed.  This can be  seen  through  a  simple $2$-dimensional example with 
$$u(x)=x_1x_2(x_1^2 - x_2^2) , \qquad v(x) = x_2(x_1 - x_2)  $$
 in the cone $\{ x_1 >x_2  > 0 \}$ with aperture   $\pi/4 $.  Consequently, there cannot exist\footnote{ One can actually prove  the failure of boundary Harnack between harmonic and super-harmonic functions,  in the first quadrant. This is illustrated in  Example 1.2 in \cite{s04}, in terms of free boundary problems.}
$C >0$ such that  
 $Cu \geq v $.   
Another simple (and  discouraging)  example is
$$u(x) = x_1^2, \qquad v(x) = x_1$$ 
in the set $\{x_1 > 0\}$. Again, there cannot exist a constant $C$ such that close to the boundary  $Cu \geq v$. 

There are two observations to make from the above examples:

\begin{itemize}
\item [1)] In the first  example the  domain is too narrow, with a sharp corner at the boundary.

\item [2)] In the second example we try to show subharmonic functions can dominate harmonic functions, by multiplying with a constant; this in general  fails.

\end{itemize}

To put things in perspective, let   $D= \{ x_1>0 \} \cap \{x_1 > x_2\}$Êand consider the following function 
$$w(x) = x_1 (x_1 -  x_2)     \quad \hbox{in }  D_r=D   \cap B_r(0).$$

 Now let $u $ be  the positive homogeneous harmonic function vanishing on the boundary of the cone $D$.  Then for $v= u -w$ we have  that  $ u  (x)  >   v (x) $, since  
  $w>0 $ in $D$ (by definition),  and $v >0$ since $w \approx r^2$ and $u \approx r^a $ ($a<2$),
  for $r$ small.   We also have 
$$
\Delta u =0, \qquad \Delta v = -1 \qquad \hbox{in }  D,
$$
with zero boundary values. In particular we have a boundary Harnack principle   $ u  (x)  >   v (x) $
where a harmonic function dominates a superharmonic one.

The difference between this example and the first example above is that the cone is wider. The question that naturally arises is whether such a behaviour can be structured through 
a general statement, and if so what are the conditions for such   a boundary Harnack principle.

A further observation is that when  the domain  $D$ is uniformly $C^{1,Dini}$ then a boundary Harnack principle holds, between a positive harmonic   and a super-harmonic   functions (with bounded r.h.s.), vanishing on the boundary  $ (\partial D) \cap B_1$. This is an easy consequence of Hopf's boundary point lemma  and $C^1$-regularity of solutions. Indeed,
 let  $u$ be a  non-negative  harmonic function in  $D$  and   $v$ satisfy $\Delta v = -1$ in $D$, both with zero Dirichlet data on $ (\partial D) \cap B_1$. 
 Then   by Hopf's boundary principle (applied to $u$) and 
that   $v$ is uniformly  $C^1$, we have
$$
\partial_\nu  u \geq C_0 > 0  \quad  \hbox{on } \partial D \cap B_1, \qquad \hbox{universal } C_0,
$$
$$
\partial_\nu  v \leq  C_1  \quad  \qquad  \hbox{on } \partial D, \qquad \hbox{universal } C_1,
$$
where  $\nu $ is  the interior normal direction, on  $\partial D$.
Hence for  $k$ large, the function $w_k := k u - v$ satisfies
   $$\partial_\nu  w_k= k \partial_\nu  u -  \partial_\nu  v  \geq  k C_0 - C_1 >  0 \qquad   \hbox{on } \partial D \cap B_1.$$ Hence  $ w_k >  0$   in $D$ in a small neighbourhood of the  $\partial D \cap B_1$ and inside $D$; this is exactly the  boundary Harnack we asked for.

\subsection{Main Result}

Our main result is the boundary Harnack principle mentioned above between a superharmonic and a positive  harmonic function, in Lipschitz  domains. 
For instructional reasons and for the benefit of the non-expert reader,
  we first   state  and prove  the theorem inside a  cone; see Theorem \ref{t:cone}.  The proof in this case presents the main ideas for the general case. Next, we do the same for a Lipschitz domain; see Theorem \ref{t:mainrhs}. We  then apply the ideas to divergence operators, see Theorem \ref{t:genrhs}. Our result holds even when including zero-order terms. 
  This result appears to be new even when considering the standard boundary Harnack principle without right hand side, i.e. for solutions rather than supersolutions. 
  
  In proving our new boundary Harnack principle, we will often utilize the boundary Harnack principle (without right hand side) for two nonnegative solutions. 
  To avoid confusion we will always refer to this as the ``standard boundary Harnack principle''.

  \subsection{Related results}
As previously mentioned the boundary Harnack principle has been proven on very general domains as well as for various operators. Recently, De Silva and Savin in \cite{ds15}
generalized the result in a new direction by showing that improving the regularity of the boundary improves the regularity of the boundary Harnack inequality.  Their result 
applies to elliptic operators in divergence form with appropriate smoothness assumptions on the coefficients. Roughly speaking, if $u,v$ vanish on $\partial \Omega$ with 
$\mathcal{L}u=0$ and $\mathcal{L}v=f$
in $\Omega$, and if $u>0$ in $\Omega$ and 
$\partial \Omega \in C^{k,\beta}$, then
\begin{equation}   \label{e:ds}
\left\| \frac{v}{u} \right\|_{C^{k,\beta}}  \leq C \left(\| v \|_{L^{\infty}} + \| f  \|_{C^{k-1,\beta}} \right).
\end{equation}
As a corollary of Theorem \ref{t:mainrhs} we obtain a similar estimate on Lipschitz domains with small enough Lipschitz norm, see Corollary \ref{c:mainrhs}.  

The result in \cite{ds15} illustrates the principle that improving the regularity of the boundary gives a boundary Harnack principle for solutions with a right hand side. Our result 
shows that Lipschitz regularity is a sufficient condition to obtain an estimate of the form \eqref{e:ds} when $\beta=0$, and the allowed behavior for $f$ is determined by the Lipschitz constant. 
 
 Another related result is found in \cite{s96} where it is shown a superharmonic function is comparable to the first eigenfunction for a domain in $\mathbb{R}^2$ with finitely many corners and with
 an interior cone condition.

\subsection{Applications}
We present two applications of our boundary Harnack principle: to the Hele-Shaw flow and to the obstacle problem. 
\subsubsection{Hele-Shaw Flow} 
The Hele-Shaw flow may be formulated as follows:
For $t >0$ we define $u^t(x) $ as the solution of
$$
\Delta u^t (x) = \chi_{\Omega^t} - \chi_{\Omega^0 } - t\delta_z \qquad \hbox{in } \R^n,
$$
where $\Omega^0$ is the initial domain (filled with liquid) and $\Omega^t = \{u^t > 0\} $, and $z \in \Omega^0$ is the liquid-injection point. The Dirac  mass at the point $z$ means we inject more fluid at that point.
 This formulation is obtained after a Baiocchi-transformation, which is easily found in the classical  literature for  free boundary problems.

Now suppose we do the Hele-Shaw experiment on a ``table" having   corners of various 
angels. More exactly suppose we consider 
$$
\Delta u^t (x) = \chi_{\Omega^t} - \chi_{\Omega^0 } - t\delta_z \quad \hbox{in } D, \qquad u^t (x) = 0 
\quad \hbox{on } \partial  D,
$$
where $D \supset \Omega^0$ is a given domain. 
The zero boundary data on $\partial D$ means that the liquid, when reaching the edge of the table, will fall to the ground.
The question is whether the liquid will reach all points of the boundary of $D$ in finite time $t\leq t_0< \infty$.

By a  barrier argument (see \cite{s04}) one can show that in two dimensions corners with angle  smaller than 
 or equal to  $\pi/2$ will not   get wet; an analogous result for higher dimensions is a consequence of   Theorem \ref{t:counter} in this paper. 
Now the question is what can happen when the angle of the corner is larger than $\pi/2$; will the liquid reach such a corner?  The answer to this question is yes, see \cite{s04}. 

We now consider $D$ with Lipschitz boundary with Lipschitz constant $L< 1$. As a consequence of Theorem \ref{t:mainrhs} we now show that every point of the boundary will get wet.  We need to show for large values of $t$ that $u^t > 0$ in $D \cap B_r (z^1)$, for any $z^1 \in  \partial D$. We write $u^t (x) = h^t (x) - k(x)$ 
where $h^t $ is the harmonic function in $B_r(z^1) \cap D$ with boundary values $h^t= u^t$, and  $k$ ($t$-independent) satisfies
$$
\Delta k = -1 \quad \hbox{ in } B_r(z^1) \cap D, 
$$
with zero boundary values on $\partial ( B_r(z^1) \cap D)$. 
It suffices then to show 
 that for large values of $t$ we have 
$$
h^t \geq k \qquad \hbox{ in } B_r(z^1) \cap D.
$$

By Theorem \ref{t:mainrhs}, for fixed $t$ there exists some large constant $C_t$ such that $C_t h^t \geq k$. By the standard  boundary Harnack principle 
\[
 \sup_{B_{r/2}(z^1)\cap D} \frac{h^t}{h^s} \leq C \frac{h^t(z^2)}{h^s(z^2)},
\]
for a fixed $z^2 \in B_r(z^1)\cap D$. 
Since $\lim_{s \to \infty} h^s(z^2) = \infty$, we choose $s$ large enough so that $h^t(z^2)/h^s(z^2) < C^{-1}/C_t $. 
Then 
\[
 \sup_{B_{r/2}(z^1) \cap D}\frac{h^t}{h^s} \leq C\frac{h^t(z^2)}{h^s(z^2)} \leq \frac{C C^{-1}}{C_t}, 
\]
so that $h^s \geq C_t h^t \geq k$ to conclude that $u^t>0$ in $B_{r/2}(z^1) \cap D$.

\bigskip

A related question to the Hele-Shaw flow reaching corners, is whether for 
 $D \subset \R^n$, with $0  \in \partial D$, and $ \partial B_1 \cap D \not =\emptyset $, 
a solution to 
$$
\Delta v_m = 1 \quad  \hbox{ in } D \cap B_1, \qquad v_m=0 \hbox{ on } \partial D \cap B_1
\qquad  v_m=m \hbox{ on } \partial B_1 \cap D,
$$
will be non-negative in $D \cap B_1$ for $m$ large enough. The answer to this question is already given in the  discussion for Hele-Shaw experiment above.

\subsubsection{Obstacle Problem}
A further application can be made to  the regularity theory of the free boundary  for the obstacle problem, that is formulated as a solution to   
$$
\Delta  v =  h\chi_{\{v > 0\}}, \qquad v  \geq 0 \qquad \hbox{in } B_1,
$$
with $h \geq c_0 >0 $ Lipschitz, and a  prescribed  Dirichlet data on $\partial B_1$.
 In proving regularity of the free boundary for the obstacle problem  
one can show that if a free boundary point $z \in \partial \{v > 0\} \cap B_{1/2}$ is not a cusp point, then for some $r>0$ and  direction $e$,   $v_e > 0$   in the set $\{ v> 0\} \cap B_r(z) $,   
  and that the free boundary is Lipschitz in $B_r(z) $. 

One can actually show that Lipschitz norm can be taken as small as one  wishes, by taking the neighbourhood of $z$ smaller. The proof for (non-uniform) Lipschitz regularity is actually much simpler than proving uniform regularity. 
We refer to \cite{psu12}  for  background and details as well as other related original references. 

The boundary Harnack principle in this paper allows us to deduce $C^{1,\alpha}$-regularity of the free boundary for the obstacle problem, in an elementary way.\footnote{This was done for constant $h$ in \cite{ath-caff-1985} using the standard boundary Harnak principle for Lipschitz domains.}
  Indeed, we may consider the function $H(x) =  v_{e_1} - C v  $, which satisfies\footnote{Actually this conclusion is part of proving the Lipschitz regularity of the free boundary; see \cite{caffarelli:obstacle-revisited}
}
$$
H > 0 , \qquad  v_{e_1} > 0 , \qquad    \Delta H  = h_{e_1} - C h \leq 0   \qquad \hbox{in } \{v >0\} \cap B_r(z).
$$
We will now apply Corollary \ref{c:mainrhs} to the  functions $H=v_{e_1}-Cv$ and $v_e$ for any direction $e$ orthogonal to $e_1$. Both $H$ and $v_e$ vanish on the free
boundary $\partial \{v>0\}$. 
By taking a neighborhood of $z \in \partial \{v > 0\} \cap B_{1/2}$ small enough, the Lipschitz norm of $\partial \{v > 0\} \cap B_{1/2}$ can be made arbitrarily small. 
 Since $\Delta H \leq 0$ and $|\Delta H|$ is bounded, we may apply 
Corollary  \ref{c:mainrhs} with $\gamma=0$ to conclude that 
\begin{equation}\label{quotient}
\frac{ v_e }{ v_{e_1}  - Cv } 
\end{equation}
is $C^\alpha$ inside $\{v >0\}$ (close to the free boundary point $z$).

 Next fix a level surface\footnote{The level surface is smooth since $v_{e_1} > 0$ there.}   $v=l$, and denote the free boundary as a Lipschitz graph $x_1=G(x')$.  
 Consider  $v(G(x') , x') = l$, which after differentiation in the $e$-direction   gives
$$
 -G_e  = \frac{ v_e }{ v_{e_1} } , 
$$
inserting this into \eqref{quotient} gives us 

$$
\frac{ v_e }{ v_{e_1}  - Cv } =  \frac{ \frac{ v_e}{ v_{e_1} } }{ 1   - \frac{ Cv}{  v_{e_1}  }} =  
\frac{-G_e  }{ 1   - \frac{ Cl}{  v_{e_1}  }  } ,
$$
is $C^\alpha$, independent of $l$.  Since\footnote{See e.g. \cite{psu12}}
 $v_{e_1} \approx \sqrt{l}$ we have that 
 $$
\frac{ v_e }{ v_{e_1}  - Cv } =
\frac{-G_e  }{ 1   - \frac{ Cl}{  v_{e_1}  }  }   \to -G_e,
$$
as $l \to 0$. Hence $G_e$ is $C^\alpha$.

%
  
   \subsection{Future directions}
 It seems plausible that the results presented in this paper can be generalized to other  operators, 
as well as more complicated domains. Here we have chosen to treat the problem in Lipschitz domains only. In the final section we consider second order elliptic equations of 
divergence form. The coefficients are variable and assumed to be only bounded and measurable.

Key elements of our approach is the standard boundary Harnack principle, barrier arguments, as well as scaling and blow-up invariance. Since our approach is indirect and uses scalings, the core idea is to look at nonnegative solutions on global domains.  The technical difficulties that seem to arise for generalizitions of our result concern the invariance of the domains in scaling.

The methods presented should also work to prove a boundary Harnack principle 
for the positivity set of a solution to the thin obstacle problem as long as it is assumed
a priori that the free boundary is Lipschitz. Then  we may argue 
in a similar way as above  for the thin obstacle problem, with equations having Lipschitz right hand side; see \cite{ath-caff-1985} in combination with our result.

%
%
%
%
%


\section{Boundary Harnack in  Cones}  \label{s:cone}
 We let $\mathcal{C}$ be any open cone in $\R^n$, with vertex at the origin such that 
 $\mathcal{C}\cap \mathbb S^{n-1}$ is connected. If $u$ is any harmonic function on $\C$ with $u=0$ on $\partial \C$, then in 
 spherical coordinates 
  \begin{equation}\label{expansion}
     u(r,\theta) = \sum_{k=1}^{\infty} r^{\alpha_k} f_k(\theta),
  \end{equation}
  where $f_k$ are the eigenfunctions to the Laplace-Beltrami on $\C \cap \partial B_1$. 
  If $u$ is harmonic on $\C$ and nonnegative, then $u$ is unique up to multiplicative constant to $r^{\alpha_1}f_1(\theta)$.


 \begin{theorem}   \label{t:cone}
  Let $\C$ be a  connected open cone in $\R^n$ with $\C \cap \mathbb S^{n-1}$ an 
  $(n-2)$-dimensional 
  $C^{1,\alpha}$ submanifold.\footnote{
  The assumption that $\partial \C \cap \mathbb S^{n-1}$ is an $(n-2)$-dimensional manifold of class $C^{1,\alpha}$ is not necessary. 
  As we will see in Section \ref{s:lipschitz}, $\partial \C \cap \mathbb S^{n-1}$ may be a Lipshitz manifold provided the Lipschitz constant
  is small enough depending on $\gamma$, that appears in \eqref{eq:v}} Let $r^{\alpha_1}f_1(\theta)\geq0$ be harmonic in $\C$ with 
  zero Dirichlet boundary data.  Let $u$ solve
   \[
    \begin{aligned}
      \Delta u&=0  \text{ in } \C \cap B_1 , \\
      u &\geq 0   \text{ in }  \C \cap B_1 , \\
      u&=0  \text{ on }  \partial \C \cap B_1, 
    \end{aligned}
   \]
   and let $v$ satisfy  
        \begin{equation}\label{eq:v}
    \begin{aligned}
      0 \geq \Delta &v(x)\geq -C_0|x|^{\gamma}  & \ &\text{ in } \C \cap B_1 , \\
      v&=0  & \ &\text{ on }  \partial \C \cap B_1 , \\
      |v|& \leq C_0 & \ &\text{ in } \C\cap B_1,
    \end{aligned}
       \end{equation}
   with $2-\alpha_1+\gamma>0$. If $x^0 \in \C \cap B_1$, then there exists a constant $C$ depending only on 
   $\C,2-\alpha_1 + \gamma$, dimension $n$, and dist$(x^0, \partial(\C \cap B_1))$ such that 
   \[
      \frac{v(x)}{u(x)} \leq C \frac{v(x^0)}{u(x^0)}   \ \text{ for any } x \in \C \cap B_{1/2}. 
   \]
 \end{theorem}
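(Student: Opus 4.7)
The plan is to reduce, via the standard boundary Harnack principle, to proving a uniform upper bound $v \leq C\Phi$ on $\mathcal{C} \cap B_{1/2}$, where $\Phi(x) := r^{\alpha_1} f_1(\theta)$ is the model nonnegative harmonic in the cone. Applied to $u$ and $\Phi$ (both positive harmonic on $\mathcal{C} \cap B_1$ vanishing on $\partial \mathcal{C} \cap B_1$), the standard principle yields $u \asymp \Phi$ on $\mathcal{C} \cap B_{1/2}$ with constants depending on $x^0$, so after normalizing $v(x^0)/u(x^0)=1$ the statement reduces to finding $C$ with $v \leq C\Phi$. A preliminary localization confines any possible failure to the vertex: at $y^\ast \in (\partial \mathcal{C} \cap \overline{B_{1/2}}) \setminus \{0\}$, $\partial \mathcal{C}$ is a $C^{1,\alpha}$ hypersurface, $\Phi$ satisfies $\partial_\nu \Phi > 0$ by Hopf, and $v \in C^1$ up to $\partial \mathcal{C}$ near $y^\ast$ because $C_0|x|^\gamma$ is bounded there; hence $v/\Phi$ is uniformly bounded in every conic neighborhood of $\partial \mathcal{C} \setminus \{0\}$, so any sequence along which $v/\Phi \to \infty$ must tend to $0$ while staying an angular distance $\geq \delta > 0$ from $\partial \mathcal{C}$.

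Suppose such a sequence $x_k \to 0$ exists; set $r_k := |x_k|$, $\hat x_k := x_k/r_k$, and $M_k := v(x_k)/\Phi(x_k) \to \infty$. I would rescale at rate $\alpha_1$,
\[
 \tilde v_k(x) := \frac{v(r_k x)}{M_k\,r_k^{\alpha_1}}, \qquad x \in \mathcal{C} \cap B_{1/r_k},
\]
obtaining $\tilde v_k(\hat x_k) = f_1(\theta_{\hat x_k}) \geq \delta$ and
\[
 |\Delta \tilde v_k(x)| \;\leq\; C_0\,\frac{r_k^{2-\alpha_1+\gamma}}{M_k}\,|x|^\gamma,
\]
which tends to zero locally uniformly on $\mathcal{C}$ thanks to the hypothesis $2-\alpha_1+\gamma > 0$ combined with $M_k \to \infty$. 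Interior Harnack chains based at $\hat x_k$ give uniform bounds on compact subsets of $\mathcal{C}$, and the $C^{1,\alpha}$ smoothness of $\partial \mathcal{C} \cap \mathbb{S}^{n-1}$ (together with the uniform smallness of $|\Delta \tilde v_k|$) supplies uniform boundary H\"older regularity up to $\partial \mathcal{C} \setminus \{0\}$. An Arzel\`a--Ascoli/diagonal extraction then produces a subsequential limit $\tilde v \geq 0$ on all of $\overline{\mathcal{C}}$ which is harmonic on $\mathcal{C}$, vanishes on $\partial \mathcal{C}$, and satisfies $\tilde v(\hat x^\ast) = f_1(\theta^\ast) > 0$ at the accumulation point $\hat x^\ast = \lim \hat x_k$.

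The contradiction is then closed by combining the Liouville-type uniqueness implicit in \eqref{expansion} — every nonnegative harmonic function in $\mathcal{C}$ vanishing on $\partial \mathcal{C}$ is a scalar multiple of $\Phi$, forcing $\tilde v = c\Phi$ with $c=1/f_1(\theta^\ast) > 0$ — with the global $L^\infty$ bound $|v| \leq C_0$, which translates into $\tilde v_k(x) \leq C_0/(M_k r_k^{\alpha_1})$ on the entire rescaled domain. In the main subcase where $M_k r_k^{\alpha_1}$ stays bounded away from zero, passing to the limit makes $\tilde v$ globally bounded on $\mathcal{C}$, incompatible with $\tilde v = c\Phi$ growing like $r^{\alpha_1}$. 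In the residual subcase $M_k r_k^{\alpha_1} \to 0$ — i.e., the near-maximizer satisfies $v(x_k)$ decaying strictly faster than $\Phi(x_k)$ — one reduces to the first subcase by reselecting $x_k$ through a doubling/stopping-time argument at a refined scale at which $v$ does not decay faster than $\Phi$.

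The delicate step I expect to be the main obstacle is precisely this last selection of the correct scale, together with the uniform boundary regularity up to $\partial \mathcal{C} \setminus \{0\}$ needed for the compactness argument. The three hypotheses of the theorem play distinct structural roles: $2-\alpha_1+\gamma > 0$ kills the rescaled right-hand side in the limit, $|v|\leq C_0$ closes the Liouville dichotomy, and the $C^{1,\alpha}$ smoothness of $\partial \mathcal{C} \cap \mathbb{S}^{n-1}$ provides the uniform boundary regularity used in the Arzel\`a--Ascoli extraction.
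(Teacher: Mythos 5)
There is a genuine gap, and it sits exactly where the paper has to work hardest. Your contradiction hinges on the dichotomy in $M_k r_k^{\alpha_1}$, but the ``main subcase'' ($M_k r_k^{\alpha_1}$ bounded away from zero) is vacuous: since $v$ is continuous with $v=0$ at the vertex and you have arranged $f_1(\theta_{\hat x_k})\geq \delta'>0$, you always have $M_k r_k^{\alpha_1}=v(x_k)/f_1(\theta_{\hat x_k})\to 0$. Hence the global bound $|v|\leq C_0$ gives only $\tilde v_k\leq C_0/(M_k r_k^{\alpha_1})\to\infty$, your compactness/Liouville contradiction never applies, and everything is deferred to the unspecified ``doubling/stopping-time'' reselection. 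That reselection is the actual crux: the ratio $v/\Phi$ can diverge arbitrarily slowly (think $\ln\ln(1/r)$), in which case no naive rescaling of $v$ alone converges to something simultaneously nontrivial, bounded, and with vanishing right-hand side. The paper's proof is built precisely to handle this: it subtracts the matched multiple of $u$ and renormalizes the difference ($w_r$), tracks the dyadic oscillation coefficients $a_k$ whose series diverges, invokes Lemma \ref{l:sumdiv} to select scales at which the normalization does not degenerate faster than $[\ln(1/r_k)]^{-2}$ while the sums grow only linearly, and then derives a contradiction not from boundedness but from the spectral gap in \eqref{expansion}: the limit is a sign-changing harmonic function (it vanishes at $x^0$ and has sup $1$ on $B_1$) growing at most like $|x|^{\alpha_1}\ln|x|$, which is incompatible with the forced $r^{\alpha_2}$ growth. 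None of this structure is recoverable from your sketch as written.

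A second, smaller but still real, problem is the localization step. The claim that any sequence with $v/\Phi\to\infty$ must stay at angular distance $\geq\delta$ from $\partial\C$ is circular: near a lateral boundary point at distance $s$ from the vertex, Hopf gives $\partial_\nu\Phi\sim s^{\alpha_1-1}$ while the $C^1$ bound on $v$ at that scale gives $|\nabla v|\lesssim s^{-1}\sup_{B_{2s}}|v|+s^{1+\gamma}$, so the resulting estimate is $v/\Phi\lesssim s^{-\alpha_1}\sup_{B_{2s}}|v|+s^{2+\gamma-\alpha_1}$, and boundedness of the first term is essentially the conclusion you are trying to prove. The paper avoids this circularity by first establishing the radial bound $v(rx^0)\leq Cu(rx^0)$ along an interior direction, and only afterwards propagating to points near $\partial\C$ via the boundary Harnack principle with right-hand side of \cite{ds15} on annuli (where the $C^{1,\alpha}$ hypothesis is used); your Hopf argument could play the role of that last step, but only after the interior radial estimate is in hand. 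Finally, note that interior Harnack chains alone do not bound a nonnegative supersolution from above on compact sets by its value at one point; in your scheme that upper bound had to come from the global $L^\infty$ bound, which loops back to the degenerate normalization above.
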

 
 Theorem \ref{t:cone} is a boundary Harnack principle, but with a right hand side. Clearly, a harmonic solution will 
 control a subsolution. The significance of Theorem \ref{t:cone} is that a harmonic solution can control a supersolution, and 
 that the allowed behavior for the right hand side depends on the opening of the cone or more explicitly on $\alpha_1$. 
 When the opening of the cone is large (so that $\alpha_1$ is small), then negative values for $\gamma$ are allowed, and the right
 hand side can have singular behavior near the boundary. When the opening of the cone is small (so that $\alpha_1$ is large), 
 then $\gamma$ must be positive and large, so that the right hand side must decay as it approaches the boundary.

In order to prove Theorem \ref{t:cone}, we will need the following convergence result. 

\begin{lemma}  \label{l:seq}
 Let $\C$ be an open cone with $\partial \C \cap \mathbb S^{n-1}$ an $(n-2)$-dimensional Lipschitz submanifold.  Fix $0< \epsilon < R$ and let  $\gamma >\alpha_1 - 2$. For any sequence $v_k$ satisfying   
 \[
   \begin{aligned}
     |\Delta v_k| &\leq C_0 |x|^{\gamma} &\text{ in } \C \cap B_R , \\
     |v_k| &\leq C_0  &\text{ in } \C \cap B_R , \\
     v_k &=0 &\text{ on } \partial \C \cap B_R , \\
   \end{aligned}
  \]
 then there exists a subsequence $v_k \to v$ uniformly on $\C \cap B_{R-\epsilon}$ with $v$ inheriting the above properties. 
 \end{lemma}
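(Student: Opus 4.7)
The plan is to establish the compactness of $\{v_k\}$ via the Arzelà--Ascoli theorem on $\overline{\C\cap B_{R-\epsilon}}$. The uniform $L^\infty$ bound is given, so the task reduces to producing a modulus of continuity that is uniform in $k$ and valid up to the conical vertex at $0$. I split this into two ingredients: (i) uniform $C^\alpha$ bounds on compact subsets of $\overline{\C\cap B_{R-\epsilon}}\setminus\{0\}$, and (ii) a uniform pointwise smallness estimate $|v_k(x)|\to 0$ as $x\to 0$, uniformly in $k$. The hypothesis $\gamma>\alpha_1-2$ enters only in (ii).

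Ingredient (i) is standard. On any compact subset of $\C$ that avoids both $\partial\C$ and the vertex, $|\Delta v_k|$ is uniformly bounded (by $C_0\max(R^\gamma,\delta^\gamma)$ on $\{|x|\geq\delta\}$), so interior Calderón--Zygmund/Schauder estimates provide uniform $C^{2,\alpha}_{\mathrm{loc}}$ bounds. Near $\partial\C\cap(B_{R-\epsilon/2}\setminus B_\delta)$, the standard $C^\alpha$ boundary regularity for the Poisson equation in Lipschitz domains gives uniform $C^\alpha$ estimates up to $\partial\C$. A finite covering then yields equicontinuity on $\overline{\C\cap B_{R-\epsilon}}\setminus B_\delta$ for each fixed $\delta>0$.

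Ingredient (ii) is the heart of the proof and relies on a barrier. Enlarge $\C$ slightly to a Lipschitz cone $\C'\supsetneq\C$ so that $\overline{\C\cap\mathbb{S}^{n-1}}$ is compactly contained in $\C'\cap\mathbb{S}^{n-1}$, and let $f'_1>0$ denote the first Dirichlet Laplace--Beltrami eigenfunction on $\C'\cap\mathbb{S}^{n-1}$, with eigenvalue $\lambda'_1$ and associated homogeneity exponent $\alpha'_1<\alpha_1$. Then $f'_1\geq c_0>0$ on $\overline{\C\cap\mathbb{S}^{n-1}}$. Pick any $s\in(0,\alpha'_1)$; since $\gamma+2>\alpha_1>\alpha'_1>s$, we also have $s-2<\gamma$. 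Define the barrier $\Phi(x)=A\,r^s f'_1(\theta)$. The spherical form of the Laplacian then gives
\[
  -\Delta\Phi \;=\; A\bigl[\lambda'_1-s(s+n-2)\bigr]\,r^{s-2}f'_1(\theta) \;\geq\; A\,c_0\bigl[\lambda'_1-s(s+n-2)\bigr]\,R^{s-2-\gamma}\,r^\gamma \qquad\text{on }\C\cap B_R,
\]
where the last inequality uses $r\leq R$ together with $s-2-\gamma<0$. Choose $A$ large enough that this right-hand side dominates $C_0 r^\gamma$ and that $\Phi\geq C_0$ on $\partial B_R\cap\C$. Then $\Phi\pm v_k\geq 0$ on $\partial(\C\cap B_R)$ and $\Delta(\Phi\pm v_k)\leq 0$ in $\C\cap B_R$, so the minimum principle yields $|v_k(x)|\leq\Phi(x)\leq A(\max f'_1)\,r^s$ throughout $\C\cap B_R$, which is exactly (ii).

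Combining (i) and (ii) and applying Arzelà--Ascoli extracts a subsequence converging uniformly on $\overline{\C\cap B_{R-\epsilon}}$ to some $v$; the bounds $|v|\leq C_0$ and $v=0$ on $\partial\C\cap B_{R-\epsilon}$ pass to the limit immediately. Upgrading the convergence to $C^2_{\mathrm{loc}}$ on $\C\cap B_{R-\epsilon}\setminus\{0\}$ by applying Schauder estimates to the differences $v_k-v_j$ yields pointwise convergence $\Delta v_k(x)\to\Delta v(x)$ there, and hence $|\Delta v(x)|\leq C_0|x|^\gamma$ away from the vertex. The principal obstacle is the barrier construction in (ii): it encodes the quantitative balance $\gamma+2>\alpha_1$ between the admissible singularity of the right-hand side and the decay of nonnegative harmonic functions at the conical vertex, and any weakening of the assumption $\gamma>\alpha_1-2$ would invalidate the choice of admissible exponent $s$.
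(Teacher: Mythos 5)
Your argument is essentially the paper's: you enlarge $\C$ to a slightly larger Lipschitz cone, use its first eigenfunction (which is bounded below on $\overline{\C\cap\mathbb S^{n-1}}$) multiplied by a power $r^{s}$ with $s$ strictly below the enlarged cone's homogeneity exponent as a supersolution barrier dominating $C_0|x|^{\gamma}$ (the paper takes $s=\alpha_1-2\delta$, you take any $s\in(0,\alpha_1')$ -- an immaterial difference for this lemma), and then combine the resulting decay at the vertex with interior and boundary H\"older estimates and Arzel\`a--Ascoli; the hypothesis $\gamma>\alpha_1-2$ enters exactly where it does in the paper.

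One step at the end is not justified as written: Schauder applied to the differences $v_k-v_j$ only controls them in terms of $\|v_k-v_j\|_{L^\infty}$ \emph{plus} the right-hand-side bound $2C_0|x|^{\gamma}$, which does not tend to zero, so you cannot conclude $C^2_{\mathrm{loc}}$ convergence or pointwise convergence $\Delta v_k(x)\to\Delta v(x)$ (the Laplacians may oscillate). The property $|\Delta v|\leq C_0|x|^{\gamma}$ still passes to the limit, but in the distributional sense: for $0\leq\varphi\in C_c^{\infty}(\C\cap B_{R-\epsilon})$ one has $\bigl|\int v\,\Delta\varphi\bigr|=\lim_k\bigl|\int \varphi\,\Delta v_k\bigr|\leq C_0\int|x|^{\gamma}\varphi$, i.e.\ $C_0|x|^{\gamma}\pm\Delta v\geq 0$ as distributions (equivalently, superharmonicity/subharmonicity of $v_k\mp V$ with $\Delta V=-C_0|x|^{\gamma}$ is stable under uniform limits). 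With that one-line fix the proof is complete and matches the paper's route.
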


\begin{proof} 
 Since $\C \cap \partial B_R$ is a Lipschitz submanifold, $\C$ is compactly contained in a slightly larger cone $\C' \supset \C $, and
  the unique nonnegative harmonic solution with zero Dirichlet data on  $\partial \C'  $  is given as $r^{\alpha_1-\delta} f_{\delta}$.  
  We note that $f_{\delta}(\theta)$   is uniformly bounded away from $0$  on $\C$. 
 We let $v=M_1 r^{\alpha_1-2\delta}f_{\delta}$  with  $M_1$ chosen later. Then  
 \[
 \Delta v= M_1[(\alpha_1-2\delta)(\alpha_1-2\delta+n-2) -(\alpha_1+\delta)(\alpha_1+\delta+n-2)]r^{\alpha_1-2}f_{\delta}. 
 \]
 We note that $\alpha_1-2-2\delta< \gamma$, so that if $M_1$ is chosen large enough, 
 \[
  \Delta v \leq -C_0 |x|^{\alpha_1-2-2\delta} \leq -C_0 |x|^{\gamma}
 \]
 Using $v$ as a barrier, the convergence result will 
 follow by standard techniques. 
 \end{proof}

 An alternate proof of the above lemma,  for a more general domain and more general operator, is given in the proof of  Lemma \ref{l:3conv}.


 We now give a proof of the main theorem in this Section. 
 \begin{proof}[Proof of Theorem \ref{t:cone}]
  Fix $x^0 \in \mathcal{C}\cap B_{1/2}$, and  consider the nonnegative homogeneous solution 
  $u=r^{\alpha_1}f_1(\theta)$. By the standard boundary Harnack principle, any solution $u$ 
  as given in the statement of Theorem \ref{t:cone} will be comparable, and consequently bounded from  
  below up to a multiplicative constant, by $r^{\alpha_1}f(\theta)$. Thus, we consider $u=r^{\alpha_1}f(\theta)$. Furthermore, any 
  function $v$ as given in the statement of Theorem \ref{t:cone} will be bounded from above by a constant multiple of the superharmonic function 
  defined by 
   \[
    \begin{aligned}
     \Delta v &=-|x|^{\gamma} &\text{ in } \C\cap B_1 , \\
     v&\geq 0 &\text{ in } \C \cap B_1 , \\
     v &=0 &\text{ on } \partial \C \cap B_1 , \\
     v &=1 &\text{ on } \C \cap \partial B_1  .
    \end{aligned}
   \]
  Thus, it suffices to prove the theorem for $v$ as defined above and with $u=r^{\alpha_1}f(\theta)$. 
    
   In the following we use $r$ as a scaling parameter which may coincide with $r$ as the polar axis for homogeneous functions. 
   We first show that there exists some constant $C$ 
 such that $v(rx^0)\leq Cu(rx^0)$ for 
   all $0<r\leq 1/2$.
 Suppose by way of contradiction that no such $C$ exists, so that if  $v(t x^0)= t^{\alpha_1} h(t)$, then 
    $\limsup_{t \to 0} h(t)=\infty$. We note that $h$ is continuous away from the origin since $v$ is continuous. 
    For $0<r<1$, we consider the rescaled functions
   \[
   w_r(x) := \frac{v(rx)-\frac{v(rx^0)}{u(rx^0)}u(rx)}{\sup_{B_1 \cap \mathcal{C}} |v(rx)-\frac{v(rx^0)}{u(rx^0)}u(rx)|},
   \]
   defined on $B_{1/r}$. We point out that $w_r(x^0)=0$ and $\sup_{B_1 \cap \mathcal{C}} |w_r|=1$. 
   We also define 
   \[
    a_k := \sup_{B_1 \cap \mathcal{C}} \left|v(2^{-k}x)-\frac{v(2^{-k}x^0)}{u(2^{-k}x^0)}u(2^{-k}x)\right| 2^{-k\alpha_1}. 
   \]
   By letting $x=tx^0$, we have 
   \[
    v(rx)-\frac{v(rx^0)}{u(rx^0)}u(rx) = [r^{\alpha_1} h(tr)-h(r)t^{\alpha_1}]r^{\alpha_1}. 
   \]
   If $r_k = 2^{-k}$, and if $1/2\leq t \leq 1$ we have
   \[
    \text{osc}_{[r_k/2,r_k]} h \leq a_k. 
   \]
   Since $\limsup_{t \to 0} h(t) = +\infty $, it follows that 
   \begin{equation}  \label{e:diverge}
    \sum a_k = + \infty.
   \end{equation}
   By the definition of $w_r$ we have
   \[
    \sup_{B_{2^j}} |w_{r_k}(x)| = \frac{\sup_{B_{2^j}} |v(r_k x) - \frac{v( r_k x^0)}{u( r_k x^0)}u(r_k x)|}{a_k r_k^{\alpha_1}}          
   \]
   We use the triangle inequality to obtain 
   \begin{equation}   \label{e:seq1}
   \begin{aligned}
    |v(r_k x) - \frac{v(r_k x^0)}{u(r_k x^0)}u(r_k x)|
    &\leq | v(r_k x) - \frac{v(2^j r_k x^0)}{u(2^j r_k x^0)}u(r_k x)| \\
      & \ + \sum_{i=0}^{j-1} \left| \frac{v(2^{j-i} r_k x^0)}{u(2^{j-i} r_k x^0)}u(r_k x) - \frac{v(2^{j-i-1} r_k x^0)}{u(2^{j-i-1} r_k x^0)}u(r_k x) \right|. 
    \end{aligned}
   \end{equation}
   To bound the first term in the above inequality, we have by definition that 
   \begin{equation}   \label{e:seq2}
    \sup_{B_{2^j}} | v(r_k x) - \frac{v(2^j r_k x^0)}{u(2^j r_k x^0)}u(r_k x)| = a_{k-j} (2^j r_k)^{\alpha_1}.
   \end{equation}
   To bound the second term in the inequality, we note that 
   \[
   \begin{aligned}
   & \left| \frac{v(2^{j-i} r_k x^0)}{u(2^{j-i} r_k x^0)}u(2^{j-i-1}r_k x^0) - \frac{v(2^{j-i-1} r_k x^0)}{u(2^{j-i-1} r_k x^0)}u(2^{j-i-1}r_k x^0) \right| \\
   &\  = \left| \frac{v(2^{j-i} r_k x^0)}{u(2^{j-i} r_k x^0)}u(r_k 2^{j-i-1}x^0) - v(2^{j-i-1} r_k x^0) \right|  \\
   & \leq a_{k-(j-i)} (2^{j-i}r_k)^{\alpha_1}. 
    \end{aligned}
   \]
    Also, since there exists $C$ depending on $x^0$ such that if $|x|=1$ then $u(rx)\leq C u(rx^0)$. If we utilize the homogeneity of $u$, we conclude that 
    \[
     \sup_{B_{2^j}} u(r_kx) \leq Cu(2^j r_k x^0). 
    \] 
    We use this and the homogeneity of $u$ to obtain 
    \begin{equation}  \label{e:seq3}
   \begin{aligned}
    &\sup_{B_{2^j}} \left| \frac{v(2^{j-i} r_k x^0)}{u(2^{j-i} r_k x^0)}u(r_k x) - \frac{v(2^{j-i-1} r_k x^0)}{u(2^{j-i-1} r_k x^0)}u(r_k x) \right| \\
    &=  \sup_{B_{2^j}} \frac{u(r_k x)}{u(2^{j-i-1} r_k x^0)}  \\
    & \quad \times 
    \left| \frac{v(2^{j-i} r_k x^0)}{u(2^{j-i} r_k x^0)}u(2^{j-i-1}r_k x^0) - v(2^{j-i-1} r_k x^0) \right|  \\   
    & \leq \frac{Cu(2^j r_k x^0)}{u(2^{j-i-1} r_k x^0)} a_{k-(j-i)} (2^{j-i} r_k)^{\alpha_1}  \\
    & = C a_{k-(j-i)} (2^{j+1} r_k)^{\alpha_1}
   \end{aligned}
   \end{equation}

   We will now bound the Laplacian of $w_{r_k}$. From \eqref{e:diverge} we may apply Lemma \ref{l:sumdiv} to the sequence $\{a_k\}$ to conclude that there exists a subsequence $k_l$ such that for any $j \in \mathbb{N}$, 
   \[
    \limsup_{k_l \to \infty} \frac{\sum_{i=1}^j a_{k_l -i}}{a_{k_l}}  \leq j. 
   \]
   By choosing $r_{k_l}$ and  combining the above inequality with \eqref{e:seq1}, \eqref{e:seq2}, and \eqref{e:seq3} we conclude 
   \begin{equation}  \label{e:logbound}
    \sup_{B_{2^j}} |w_{r_k}(x)| \leq C j 2^{j\alpha_1}. 
   \end{equation}

   From the choice of $a_{k_l}$ in Lemma \ref{l:sumdiv} and the fact that $\sum a_k = + \infty$ it follows that eventually $a_{k_l}\geq l_k^{-2}$, so that 
   if $r_{k_l} = 2^{-k_l}$, then 
   \[
    a_{k_l} \geq [\ln(1/r_k)]^2. 
   \]
   
     We now use the assumption $2-\alpha_1 + \gamma>0$. Since $\Delta u=0$ and $\Delta v=-|x|^\gamma$ we have that for a sequence $r_{k_l} \to 0$ there holds 
  
     \begin{equation}  \label{e:har}
      |\Delta w_k| \leq \frac{r_k^{2+\gamma}|x|^{\gamma}}{\sup_{B_1 \cap \mathcal{C}} |v(rx)-\frac{v(rx^0)}{u(rx^0)}u(rx)|}
      \leq C r_k^{2-\alpha_1+\gamma}[\ln(1/r_k)]^2 |x|^{\gamma}\quad   \to \ 0 ,
     \end{equation}
     as long as $2-\alpha_1 + \gamma>0$.
   
     By Lemma \ref{l:seq} there exists a subsequence $w_k \to w$ uniformly in $B_R \cap \mathcal{C}$ for any $R>0$. 
     Furthermore $w$ will satisfy 
     \[
      \begin{aligned}
       &(i) &\Delta w =0   &\qquad \text{ from } \eqref{e:har} ,\\
       &(ii) &\sup_{B_1 \cap \C} |w| =1&\  ,\\
       &(iii) &w(x^0) =0  ,  \\
       &(iv) & w(x) \leq C|x|^{\alpha_1} \ln(|x|+1) &\qquad \text{ for } |x| \geq 1 \text{ from }   \eqref{e:logbound}.   
       \end{aligned}
     \] 
     By property $(ii)$ we have that $w$ is not identically zero. By property $(iii)$ we have that $w$ changes sign, so that by \eqref{expansion}
     we have $\sup_{B_R} |w| \geq c R^{\alpha_2}$ for $R \geq 1$. Since $|x|^{\alpha_1} \ln(|x| +1) < |x|^{\alpha_2}$, for large $x$, this contradicts property $(iv)$.

    Thus, we have shown that for any $x^0 \in \C$ there exists a constant $C$ depending on $x^0$ such that 
     \[
      v(rx^0) \leq C u(rx^0) \text{ for any } 0<r\leq 1/2.  
     \]
    For  any other point $x^1 \in \C \cap B_{1/2}$, we  rescale by $v(x|x^1|/2)$, and note that 
      $|\Delta v(x|x^1|/2)| \leq |x^1/2|^2 |x|^{\gamma}$, so that    $|\Delta v(x|x^1|/2)| \leq C_1 $
   in $\C \cap (B_{3/4} \setminus B_{1/4})$. We now restrict ourselves to the situation in which $x^1/|x^1|$ is uniformly bounded away from $\partial \C$. 
   Since $v(x^0 |x^1|/2)\leq C_2 |x^1|^{\alpha_1}$, 
    we use the (interior) Harnack inequality on $\C \cap (B_{3/4} \setminus B_{1/4})$ to conclude that  
     \[
       v(x^1) \leq C_3(C_2 |x^1|^{\alpha_1} + C_1 |x^1|^{2+\gamma})  \leq C_4 |x^1|^{\alpha_1},
     \] 
     where in the second inequality we have used $2-\alpha_1 + \gamma>0$, and $|x^1| < 1$.
     
     The constant $C_3$ does not remain bounded as $x^1$ approaches $\partial \C$. Furthermore, we also have that $f_1(x^1/ |x^1|)$ goes to zero as 
     $x^1/|x^1|$ approaches $\partial \C \cap S^{n-1}$. If $x^1$ is close to $\partial \C$, 
     we employ the boundary Harnack principle \eqref{e:ds} with right hand side as given in \cite{ds15} which is applicable as long   
        as $\partial \C \cap (B_{3/4}\setminus B_{1/4}) \in C^{1,\beta}$. We may then conclude that 
     \[
     v(x^1) \leq C_5 u(x^1)(\| v \|_{L^{\infty}} + |x_1|^{2+\gamma}) \leq  C_6 u(x^1) = C_6 |x^1|^{\alpha_1}f_1(x^1/|x^1|),
     \]
     and this concludes the result.

 \end{proof}

 We now show that the assumption that $2-\alpha_1 + \gamma>0$ is essential. 
 We first consider the easier case when 
 $2-\alpha_1+\gamma<0$, and show that Theorem \ref{t:cone} cannot possibly hold. For clarity of exposition  we restrict the analysis  to  the case when $\gamma=0$, so that the right hand side is constant. 
 
 \begin{theorem}   \label{t:counter}
   Let $u$ and $v$ be as in the statement of Theorem \ref{t:cone}, and assume $\Delta v = - 1$ with 
   $2-\alpha_1  <0$. Then for any $C>0$, there exists $\rho>0$ such that 
   \[
    Cu < v \text{ in }  \C \cap B_{\rho}. 
   \]
 \end{theorem}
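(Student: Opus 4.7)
The plan is to compare $v$ to an explicit subsolution built from the homogeneous harmonic function $u = r^{\alpha_1}f_1(\theta)$. The relation $\alpha_1(\alpha_1+n-2)=\lambda_1$ (the first Dirichlet eigenvalue of $-\Delta_{S^{n-1}}$ on $\C\cap S^{n-1}$) shows that the hypothesis $\alpha_1>2$ is equivalent to $\lambda_1>2n$, and this single inequality drives the whole argument.

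First I would introduce the barrier
$$\eta(x) \;:=\; A\bigl(r^{2}-r^{\alpha_1}\bigr)f_1(\theta), \qquad A \;:=\; \frac{1}{(\lambda_1-2n)\,\|f_1\|_{L^\infty}}.$$
Using the identity $\Delta(r^\beta f_1)=r^{\beta-2}[\beta(\beta+n-2)-\lambda_1]f_1$, one sees that $r^{\alpha_1}f_1$ is harmonic while $\Delta(r^{2}f_1)=(2n-\lambda_1)f_1$, so $\Delta\eta=-A(\lambda_1-2n)f_1\geq -1$. Moreover $\eta\geq 0$ in $\C\cap B_1$ (since $\alpha_1>2$ forces $r^2\geq r^{\alpha_1}$ for $r\in[0,1]$) and $\eta$ vanishes on the entire boundary $\partial(\C\cap B_1)$: on $\partial\C$ through $f_1$, and on $\partial B_1$ through $r^{2}-r^{\alpha_1}$.

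Next I would split $v = v_0 + h$, where $v_0$ solves $\Delta v_0 = -1$ in $\C\cap B_1$ with zero Dirichlet data on the whole boundary $\partial(\C\cap B_1)$, and $h:=v-v_0$ is harmonic with $h=0$ on $\partial\C\cap B_1$ and $|h|\leq C_0$ on $\partial B_1\cap\C$. The function $\eta-v_0$ is subharmonic with zero boundary values, so the maximum principle yields $v_0\geq\eta$ in $\C\cap B_1$. For $h$, dominating $|h|$ by the harmonic function with data $C_0$ on $\partial B_1\cap\C$ and zero on $\partial\C\cap B_1$ and applying the standard boundary Harnack principle against $u$ gives $|h(x)|\leq C_{*}\,r^{\alpha_1}f_1(\theta)$ on $\C\cap B_{1/2}$.

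Combining these bounds, for $x\in\C\cap B_{1/2}$,
$$v(x)\;\geq\;\eta(x)-|h(x)|\;\geq\; f_1(\theta)\bigl[A\,r^{2}(1-r^{\alpha_1-2})-C_{*}\,r^{\alpha_1}\bigr],$$
which is at least $\tfrac{A}{4}\,r^{2}f_1(\theta)$ once $r$ is small enough. Dividing by $u(x)=r^{\alpha_1}f_1(\theta)$ yields $v/u\geq (A/4)\,r^{2-\alpha_1}\to\infty$ as $r\to 0$, so for any prescribed $C>0$ we may choose $\rho>0$ with $v>Cu$ throughout $\C\cap B_\rho$. The only delicate point is the design of $\eta$: the term $r^2 f_1$ has the right sign of Laplacian precisely when $\lambda_1>2n$, while subtracting the harmonic function $r^{\alpha_1}f_1$ tweaks the boundary values so that $\eta$ also vanishes on $\partial B_1\cap\C$ and can be compared directly with the canonical solution $v_0$.
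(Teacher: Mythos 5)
Your argument is correct, and it takes a genuinely different route from the paper's. The paper first reduces to $u=r^{\alpha_1}f_1(\theta)$ via the standard boundary Harnack principle and then runs a purely local comparison: around each point $z$ near the vertex it compares the subharmonic function $Cu-v$ with the paraboloid $h(x)=|x-z|^2/(2n)$ on $B_r(z)\cap\C$, noting that $Cu-v\leq Cr^{\alpha_1}\ll r^2/(2n)$ on $\partial B_r(z)$ for small $r$, so the comparison principle gives $Cu(z)\leq v(z)$, and strictness follows by running the argument with $C+\delta$. Your proof instead builds the global subsolution $\eta=A(r^2-r^{\alpha_1})f_1$ (which exists precisely because $\lambda_1>2n$, i.e. $\alpha_1>2$), splits $v=v_0+h$, controls the harmonic correction $h$ by the standard boundary Harnack principle, and concludes $v\geq \tfrac{A}{4}r^2 f_1$ near the vertex. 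This is more structural and yields a quantitative rate, $v/u\gtrsim r^{2-\alpha_1}\to\infty$, rather than just the qualitative failure of domination; it also handles the sign of $v$ carefully (you only use $|v|\leq C_0$, whereas the paper's boundary estimate $Cu(y)-v(y)\leq Cr^{\alpha_1}$ tacitly uses $v\geq 0$), and it makes transparent why the borderline $\alpha_1=2$ must fail differently (your constant $A$ blows up as $\lambda_1\downarrow 2n$, matching Theorem \ref{t:sharp}). The price is that your construction is tied to the exact cone geometry through $f_1$ and $\lambda_1$, while the sliding-paraboloid argument is essentially geometry-free once $u\leq Cr^{\alpha_1}$ is known. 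One small point to record explicitly, as the paper does in its first line: the theorem's $u$ is an arbitrary nonnegative harmonic function vanishing on $\partial\C\cap B_1$, so you should add the one-sentence reduction (via the standard boundary Harnack principle, $u\leq K\,r^{\alpha_1}f_1$ in $B_{1/2}$) before identifying $u$ with the homogeneous solution; with that sentence your proof is complete.
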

 
 \begin{proof}
  We note that by the standard boundary Harnack inequality, it is enough to consider $u=r^{\alpha_1}f_1(\theta)$. We normalize $f_1$ so that 
  $\sup f_1 =1$. Fix $C>0$, and let $z \in B_r \cap \C$.  Define $h(x):=|x-z|^2/(2n)$. 
  We note that $h$ is constant on $\partial B_r(z)$. If there exists $y \in \partial B_r(z)$ with $h(y)\leq Cu(y)-v(y)$, then
  \[
   \frac{r^2}{2n} = h(y) \leq Cu(y)-v(y) \leq Cr^{\alpha_1}. 
  \]
  For small enough $r$, the above inequality cannot hold since $2<\alpha_1$. Then for small enough $r$, we have
  $h \geq Cu-v$ on $\partial B_r(z)$, and hence also on $\partial(B_r(z)\cap \C)$. By the comparison principle 
  $h \geq Cu-v$ in $B_r(z)\cap \C$, and so $0=h(0)\geq Cu(z) - v(z)$, so that $Cu(z)-v(z)\leq 0$. 
  Since $C$ was arbitrary, for any $\delta >0$,  we may choose $r_0$ such that if $r<r_0$ and $x \in B_r \cap \C$, then 
  $(C+\delta)u(x)-v(x)\leq 0$. Since $u>0$ in $B_r \cap \C$ it follows that $Cu(x)-v(x) <0$ for any 
  $x \in B_r \cap \C$. 
 \end{proof}
We can  show that Theorem \ref{t:cone} is sharp
 by considering the critical case when $2-\alpha_1 =0$.  When dimension $n=2$ this result was shown in \cite{s04}. 
 \begin{theorem}   \label{t:sharp}
  Let $\C$ be a cone in $\R^n$ with $\alpha_1=2$. Then the boundary Harnack principle  with right hand side does not hold. 
 \end{theorem}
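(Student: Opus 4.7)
The plan is to exhibit an explicit counterexample. With $u(r,\theta) = r^{\alpha_1}f_1(\theta) = r^2 f_1(\theta)$ the first nonnegative homogeneous harmonic function on $\C$, I would take
\[
 v(r,\theta) := r^2 \log(1/r)\,f_1(\theta), \qquad (r,\theta) \in \C \cap B_1.
\]
This is the natural candidate from the theory of resonance: at a critical scaling exponent, multiplying the homogeneous solution $r^{\alpha_1} f_1$ by $\log(1/r)$ produces a particular solution with an $O(1)$ right-hand side, precisely because the $\log r$ contributions from the radial and angular parts of the Laplacian cancel exactly when the homogeneity and the eigenvalue match.

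The verification of the admissibility conditions in \eqref{eq:v} with $\gamma = 0$ is where the main work sits. Non-negativity, vanishing on $\partial\C\cap B_1$, and boundedness (the maximum $1/(2e)$ is attained at $r = e^{-1/2}$) are all immediate from $f_1 \geq 0$ and elementary calculus. For $\Delta v$, I would expand $\Delta = \partial_r^2 + (n-1)r^{-1}\partial_r + r^{-2}\Delta_{\mathbb S^{n-1}}$ and use the eigenvalue relation $\Delta_{\mathbb S^{n-1}} f_1 = -\lambda_1 f_1$ with $\lambda_1 = \alpha_1(\alpha_1 + n - 2) = 2n$. The $\log r$ terms assemble with coefficient $\alpha_1(\alpha_1-1) + (n-1)\alpha_1 - \lambda_1$, which vanishes by the harmonicity of $r^{\alpha_1}f_1$; the remaining constant-in-$r$ contributions collapse to yield
\[
 \Delta v = -(n+2)\,f_1(\theta).
\]
In particular $-(n+2)\|f_1\|_{L^\infty} \leq \Delta v \leq 0$, so $v$ is admissible in the sense of \eqref{eq:v} at the critical value $\gamma = 0$.

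With admissibility established, the failure of the boundary Harnack principle is immediate: along any ray with $f_1(\theta) > 0$,
\[
 \frac{v(r,\theta)}{u(r,\theta)} = \log(1/r) \longrightarrow \infty \quad \text{as } r \to 0^+,
\]
so no estimate of the form $v \leq C u$ (or $v(x)/u(x) \leq C v(x^0)/u(x^0)$) can hold. The only nonroutine step is the cancellation of the $\log r$ terms in the Laplacian, which is essentially a one-line identity once the eigenvalue equation is invoked; nonetheless it is precisely this cancellation that drives the result, and it fails away from the critical exponent, placing us exactly at the borderline $2-\alpha_1+\gamma = 0$ and showing that the strict inequality $2-\alpha_1+\gamma > 0$ in Theorem \ref{t:cone} cannot be weakened.
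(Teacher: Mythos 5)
Your proposal is correct, and it takes a genuinely different route from the paper. You produce the explicit resonance solution $v=r^{2}\log(1/r)\,f_1(\theta)$, verify $\Delta v=-(n+2)f_1(\theta)$ (the computation checks out: the $\log r$ terms cancel precisely because $\lambda_1=\alpha_1(\alpha_1+n-2)=2n$ when $\alpha_1=2$), and observe that $v/u=\log(1/r)\to\infty$ along rays, which violates the conclusion of Theorem \ref{t:cone} for an admissible pair with $\gamma=0$; since the boundary Harnack principle is quantified over all admissible $u,v$, one such pair disproves it, so this suffices for the statement as written. The paper instead argues by contradiction for the specific supersolution with $\Delta v=-1$: assuming $C^{-1}u\le v\le Cu$, it rescales $v_r(x)=v(rx)/r^2$, uses a Weiss-type monotonicity formula to extract a nontrivial $2$-homogeneous blow-up $v_0=r^2g(\theta)$ with $2ng+\Delta_{\theta}g=-1$, and rules it out by the Fredholm alternative, since $\int_{\C\cap\partial B_1}f_1>0$. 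Your argument is more elementary and completely explicit; what it does not give is failure for the model right-hand side itself, since your source term $-(n+2)f_1(\theta)$ vanishes on $\partial\C$, whereas the paper's blow-up/monotonicity argument shows the comparison fails even for the constant, nondegenerate right-hand side $\Delta v=-1$ (the case relevant to the Hele-Shaw discussion) and is more robust in settings where no explicit solution is available. Both arguments establish sharpness of the condition $2-\alpha_1+\gamma>0$.
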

 
 \begin{proof}
  Let $v$ be as in the statement of the Theorem, and let $u=r^{\alpha_1}f_1(\theta)$. Suppose that there exists $C>0$ such that 
  \begin{equation}  \label{e:boundary Harnack principle w}
    C^{-1}u(x)\leq v(x)\leq Cu(x) \text{ for all } x \in \C \cap B_{1/2}. 
  \end{equation}
 We now use a Weiss-type monotonicity formula for superharmonic functions as in \cite{mw07}. The function
     \[
      W(r,v) := \frac{1}{r^{n+2}} \int_{B_r \cap \C} (|\nabla v|^2 -2v) \ - \frac{2}{r^{n+3}} \int_{\partial B_r \cap \C} v^2, 
     \]
     is monotonically increasing in $r$ and is constant if and only if $v$ is homogeneous. 
     We now consider the rescaled functions $v_r(x):=v(rx)/r^2$. From \eqref{e:boundary Harnack principle w} 
     and the fact that $u$ is homogeneous of degree $2$ we have that for any fixed $x \in \C \cap B_{1}$, 
     there exists $C_x$ such that 
     \begin{equation}  \label{e:nondegen}
      C_x^{-1} r^2 \leq v_r(x) \leq C_x r^2 \text{  for any } 0<r<1. 
     \end{equation}
     From Lemma \ref{l:seq} we obtain that for a sequence $r_k \to 0$, $v_{r_k} \to v_0$ uniformly in $\C \cap B_{R}$ for 
     any $R>0$. Furthermore, we will show that $v_0$ satisfies 
      \[
       \begin{aligned}
        (i)& \quad v_0 \text{ is homogeneous of degree } 2 , \\
        (ii)& \quad v_0 \geq 0  ,\\
        (iii)& \quad v_0 =0 \text{ on } \partial \C  ,\\
        (iv)& \quad \Delta v_0 = -1 \text{ in } \C  ,\\
        (v)& \quad v \text{ is not identically zero.}
       \end{aligned}
      \]
      Property $(i)$ follows from the Weiss-type monotonicity formula in the following manner. One may 
      easily check that $W(\rho r,v)=W(\rho, v_r)$. Since $W(r,v)$ is monotone in $r$ it follows that 
      \[
      W(\rho, v_0) = \lim_{r_k \to 0} W(\rho, v_{r_k}) = \lim_{r_k \to 0} W(\rho r_k ,v) = W(0+,v) \text{  for any } \rho>0. 
      \]
     Since $W(\rho, v_0)$ is constant, then $v_0$ is homogeneous of degree $2$, see \cite{mw07}. Properties $(ii)$-$(iv)$ follow easily    
       from the definition of $v_{r}$ and the uniform convergence. Finally,  property $(v)$ follows from \eqref{e:nondegen}. 
     Then $v_0= r^2 g$ where the spherical piece $g$ satisfies $2ng + \Delta_{\theta} g =-1$. 
      We now utilize the Fredholm Alternative for existence for weak solutions, see Chapter 6 in \cite{e10}. 
     Since $f_1$  (in \eqref{expansion})  is a nontrivial solution, the solution $g$ exists if and only if 
     \[
     0= \langle -1 , h \rangle = \int_{\C \cap \partial B_1} -h ,
     \]
     for all $h$ such that $2n h + \Delta_{\theta} h =0$ (since the operator $2n + \Delta_{\theta}$ is self-adjoint). We recall that  
     $2nf_{1} + \Delta_{\theta} f_{1}=0$. 
     Then a necessary condition for $g$ to exist is that 
     \[
      0=\langle -1, f_1 \rangle = \int_{\C \cap \partial B_1} f_1.
     \]  
    Since $f_1>0$ in $\C \cap \partial B_1$, the above equality cannot be true.  Consequently, a solution $g$ cannot exist, and we have a contradiction.  
 \end{proof}

 
 \section{Boundary Harnack in Lipschitz domains}  \label{s:lipschitz}
 
 
 In this section we consider Lipschitz domains $\Omega_{L,R}$ where 
 \[
  \Omega_{L,R} := \{(x',x_n) \in B_R : x_n>g(x')\}, 
  \]
  and
 $g$ is Lipschitz with constant $L$, that is $|g(x')-g(y')|\leq L |x'-y'|$. We will assume $g(0)=0$, and will write $\Omega_{L}$ 
 when $R=1$ and $\Omega_{L,\infty}$ if $R=\infty$. Also, we define
 $u \in \mathcal{S}(\Omega_{L,R})$ if 
  \[
   \begin{aligned}
      \Delta u(x) &=0 \text{ in } \Omega_{L,R}  ,\\
      u(x)&=0 \text{ on } \Omega_{L,R}^c\cap B_R,
   \end{aligned}
  \]
 and for $\gamma \in \mathbb{R}$ we define
 $u \in \mathcal{S}(\Omega_{L,R},d^{\gamma})$ if 
  \[
   \begin{aligned}
     |\Delta u(x)| &\leq (\text{dist}(x,\partial \Omega_{L,R} \cap  B_R))^{\gamma} & \ &\text{ in } \Omega_{L,R} ,  \\
      u(x)&=0 & \ &\text{ on } \Omega_{L,R}^c\cap B_R.    
    \end{aligned}
  \]
 It will be necessary to use the solutions on right circular cones as barriers. Consequently, we define
  \[
    \C_L := \{(x',x_n): x_n > L |x'|\}.
   \]
 If $u \in \mathcal{S}(\C_{L,\infty})$ and $u\geq 0$, then as noted in Section \ref{s:cone}, we have  $u=r^{\alpha_1}f_1(\theta)$ 
 (and unique up to multiplicative constant), and we will denote $r^{\alpha_1}f_1$ by $u_L$. 
 We note that the definition of $\C_{-L}$ still makes sense when $-L<0$; although, the cone $\C_{-L}$ is not convex. 
 In this situation we write $\C_{-L}$ and similarly $u_{-L}$ for the nonnegative solution with zero boundary data on $\C_{-L}$.

 In order to prove a boundary Harnack principle with a right hand side for Lipschitz domains we will adapt the proof 
 from Section \ref{s:cone} in the following manner:
 \begin{itemize}
  \item We will employ compactness methods and thus need a convergence result provided by Lemma \ref{l:conv}.   
  \item We will need to bound the behavior of a nonnegative harmonic function at the boundary from above and below 
  which is given in Lemma \ref{l:contr}. 
  \item We will need a Liouville type result which is given in Lemma \ref{l:liousville}. 
    \item We will then adapt the proof of Theorem \ref{t:cone} (again using compactness techniques) to obtain the proof of 
  Theorem \ref{t:mainrhs}. 
 \end{itemize}

 \begin{lemma}   \label{l:control}
  Let $L \leq M$ and $u \in \mathcal{S}(\Omega_L)$ with $u \geq 0$.   
  Let $x \in \partial \Omega_L \cap B_{1/2}$ and let $y \in \Omega_L$ with dist$(y,\partial \Omega_L)>\delta$.   
  Then there exists a constant $C=C(n,M,\delta)$  such that 
   \[
    \begin{aligned}
    &\sup_{B_r(x)} u \leq C u(y) & \ & \text{ for all } r\leq 1/4 , \\
    &\sup_{B_{1/4}} u \geq C^{-1} u(y). 
    \end{aligned}
   \]
 \end{lemma}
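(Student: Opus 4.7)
The plan is to reduce both inequalities to two classical tools available for nonnegative harmonic functions in Lipschitz domains: the Carleson estimate (for the upper bound), and interior Harnack chains (for both bounds). Since $u \in \mathcal{S}(\Omega_L)$ vanishes identically on $\Omega_L^c \cap B_1$, each supremum may be replaced by the corresponding supremum over $\Omega_L$. The crucial geometric input is the corkscrew property, which $\Omega_L$ enjoys because its defining graph has Lipschitz constant bounded by $M$: for every $z \in \partial \Omega_L \cap B_{3/4}$ and every scale $\rho \leq 1/4$ there exists a point $A_\rho(z) \in \Omega_L \cap B_{2\rho}(z)$ with $\operatorname{dist}(A_\rho(z), \partial \Omega_L) \geq c(M)\rho$. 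In particular, one can fix once and for all a reference interior point $A^\ast \in \Omega_L$ with $\operatorname{dist}(A^\ast, \partial \Omega_L) \geq c(M)$.

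For the first estimate, fix $x \in \partial \Omega_L \cap B_{1/2}$ and $r \leq 1/4$. The standard Carleson estimate for nonnegative harmonic functions vanishing on the Lipschitz portion of the boundary yields
\[
\sup_{B_r(x)} u \;=\; \sup_{B_r(x) \cap \Omega_L} u \;\leq\; C(n,M)\, u(A_r(x)).
\]
A Harnack chain inside $\{z \in \Omega_L : \operatorname{dist}(z, \partial \Omega_L) \geq c(M)\rho/2\}$ from $A_r(x)$ up to the reference point $A^\ast$ has length controlled purely by $n$ and $M$, so $u(A_r(x)) \leq C(n,M)\, u(A^\ast)$. A second Harnack chain connecting $A^\ast$ to $y$ runs in $\{z \in \Omega_L : \operatorname{dist}(z, \partial \Omega_L) \geq \min(c(M),\delta)/2\}$, whose diameter is bounded and which admits a covering by at most $N = N(n,M,\delta)$ balls on which the interior Harnack inequality applies. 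Thus $u(A^\ast) \leq C(n,M,\delta) u(y)$, and concatenating the three inequalities gives the desired upper bound.

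For the second estimate, no Carleson inequality is needed. Applying the corkscrew property at $0 \in \partial \Omega_L$ at scale $\rho = 1/8$ produces a point $A' \in \Omega_L \cap B_{1/4}$ with $\operatorname{dist}(A', \partial \Omega_L) \geq c(M)/8$. Running a Harnack chain from $y$ to $A'$ exactly as above yields $u(A') \geq C(n,M,\delta)^{-1} u(y)$, and since $A' \in B_{1/4}$ we conclude $\sup_{B_{1/4}} u \geq u(A') \geq C^{-1} u(y)$.

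The only substantive point to verify is that the Harnack-chain lengths can indeed be bounded in terms of $n$, $M$, and $\delta$ alone, uniformly over the permissible choices of $y$; this is the main (mild) obstacle. It follows because $\Omega_L \subset B_1$ has bounded diameter and the set $\{z \in \Omega_L : \operatorname{dist}(z, \partial \Omega_L) \geq \delta/2\}$ is path-connected with a connectivity modulus depending only on $M$ and $\delta$, so any two points in it are joined by a path admitting a chain of at most $C(n,M,\delta)$ overlapping balls on which Harnack applies.
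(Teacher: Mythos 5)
Your argument is correct for the harmonic case and runs along essentially the same lines as the paper, which disposes of Lemma \ref{l:control} by proving the more general Lemma \ref{l:3control} in one line: interior (weak) Harnack plus uniform H\"older continuity up to the boundary. The only real difference is packaging: you quote the Carleson estimate for nonnegative harmonic functions in Lipschitz domains (as in \cite{cfms81}) together with corkscrew points and Harnack chains, whereas the paper assembles the same estimate from its raw ingredients, namely the interior estimates of \cite{gt01} and boundary H\"older decay for functions vanishing on the graph; the latter phrasing is what lets the authors state the lemma for nonnegative supersolutions of divergence-form operators with lower-order terms and right-hand side bounded by $d^{\gamma}$ (Lemma \ref{l:3control}), a setting in which the harmonic Carleson estimate cannot simply be cited, so your proof would have to be re-run with those ingredients to cover Section \ref{s:div}. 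One caveat, which you flag yourself but then assert away: for large $M$ the truncated graph domain $\Omega_{L}$ need not even be connected (the graph may exit and re-enter $B_1$), so the set $\{z\in\Omega_L:\operatorname{dist}(z,\partial\Omega_L)\geq\delta/2\}$ is not automatically path-connected with a modulus depending only on $M$ and $\delta$; your Harnack-chain step therefore implicitly assumes $y$ lies in the component of $\Omega_L$ sitting above the graph near the origin. The paper makes the same tacit assumption, and it is harmless in every application of the lemma (where $y=e_n/2$, or where, as in Theorem \ref{t:mainrhs}, one assumes $B_1\cap\{x_n>1/4\}\subseteq\Omega_{L,2}$), so this is a shared imprecision rather than a defect specific to your proof.
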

 
 We give later a proof of a  more general version of this lemma; see Lemma \ref{l:3control} in Section \ref{s:div}.

 \begin{lemma}  \label{l:conv}
  Let $\Omega_{L_k,R_k}$ be a sequence of domains with $L_k \leq M$, $R_k \geq 1$, and $0 \in \partial \Omega_{L_k}$. Let 
  $u_k \in \mathcal{S}(\Omega_{L_k,R_k}, d^{\gamma})$, and let $\alpha$ be the degree of homogeneity for $u_M$, and assume
    $2-\alpha+\gamma >0$. 
  We further assume either       
  \[
    \begin{aligned}
      &(1) \quad u_k \geq 0  \ \text{ and }  \sup_{B_{1/2}} u_k \leq 1 , \quad or \\ 
      &(2)\ \sup_{B_r} u_k \leq C r^{\beta}  \ \text{ for } r\leq 1 \text{ and some constants } C, \beta >0.  
     \end{aligned}
   \]
  Then there exists a subsequence with a limiting domain $\Omega_{L_0,R_0}$ and a limiting function 
  $u_0 \in \mathcal{S}(\Omega_{L_0,R_0},d^{\gamma})$ such that  
  \[
   \sup_{B_r} |u_k - u_0| \to 0 \qquad \hbox{as } k \to \infty ,
  \]
  for all $r<R_0$. 
 \end{lemma}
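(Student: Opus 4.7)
The plan is to extract a converging subsequence of domains, to establish uniform pointwise bounds together with uniform boundary decay for the $u_k$, and finally to produce a locally uniformly convergent subsequence via Arzel\`a--Ascoli. The critical input is a cone barrier that dominates the right-hand side $|x|^{\gamma}$, and this is precisely where the hypothesis $2-\alpha+\gamma>0$ enters.

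For the domain limit, the defining graphs $g_k$ are Lipschitz with constant $\leq M$ and satisfy $g_k(0)=0$, so the family is uniformly bounded and equicontinuous on every compact subset of $\R^{n-1}$. Arzel\`a--Ascoli together with a diagonal extraction gives a subsequence (not relabeled) with $g_k \to g_0$ locally uniformly, where $g_0$ is Lipschitz with constant $\leq M$ and $g_0(0)=0$; a further extraction gives $R_k \to R_0 \in [1,\infty]$. Write $\Omega_{L_0,R_0}$ for the limit domain. Graph convergence implies that any compact $K \subset \Omega_{L_0,R_0}$ lies in $\Omega_{L_k,R_k}$ for large $k$ with $\mathrm{dist}(K,\partial \Omega_{L_k}) \to \mathrm{dist}(K,\partial \Omega_{L_0})>0$.

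The heart of the argument is the barrier. Since $L_k \leq M$ and $g_k(0)=0$, the Lipschitz structure gives $\Omega_{L_k} \cap B_\rho(z) \subset z + \C_{-M}$ for every boundary point $z \in \partial \Omega_{L_k} \cap B_{1-\epsilon}$ and a uniform $\rho=\rho(M,\epsilon)>0$. Imitating Lemma~\ref{l:seq}, pick a cone $\C' \supset \overline{\C_{-M}}\setminus\{0\}$ slightly wider, let $r^{\alpha_{-M}-\delta}f_\delta$ be its nonnegative homogeneous harmonic solution (writing $\alpha_{-M}$ for the degree of homogeneity of $u_{-M}$), and set
\[
 w(x) = K\,r^{\alpha_{-M}-2\delta}f_\delta(\theta).
\]
A direct computation in spherical coordinates gives $\Delta w \leq -|x|^{\gamma}$ in $\C_{-M}$ whenever $\alpha_{-M}-2-2\delta<\gamma$; since $\alpha_{-M} \leq \alpha$ (wider cones have smaller eigenvalues), the hypothesis $2-\alpha+\gamma>0$ makes this achievable for $\delta>0$ small. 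Meanwhile, case (2) yields a uniform pointwise bound on $u_k$ directly, while in case (1) one obtains the same from nonnegativity, $\sup_{B_{1/2}} u_k \leq 1$, Lemma~\ref{l:control} applied in boundary layers, and interior Harnack. Comparing $\pm u_k$ to the translate $w(\cdot - z)$ on $\Omega_{L_k} \cap B_\rho(z)$ via the maximum principle (choosing $K$ large enough to dominate $u_k$ on the lateral boundary $\Omega_{L_k}\cap \partial B_\rho(z)$) then yields the uniform boundary decay
\[
 |u_k(x)| \leq K |x-z|^{\alpha_{-M}-2\delta}, \qquad z \in \partial \Omega_{L_k} \cap B_{1-\epsilon}.
\]

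Finally, away from the boundary $|\Delta u_k|$ is uniformly locally bounded, so interior elliptic estimates give uniform $C^{1,\mu}_{\mathrm{loc}}$ control on compact subsets of $\Omega_{L_0,R_0}$; combined with the uniform boundary decay and the Hausdorff convergence of the boundaries, the family $\{u_k\}$ is equicontinuous on $\overline{\Omega_{L_0,R_0}}\cap B_r$ for every $r<R_0$. Arzel\`a--Ascoli together with a diagonal extraction produces a subsequence converging locally uniformly to some $u_0$; distributional convergence of $\Delta u_k$ transfers the inequality to $|\Delta u_0|\leq d(\cdot,\partial \Omega_{L_0,R_0})^{\gamma}$, and the uniform boundary decay passes to $u_0=0$ on $\partial \Omega_{L_0,R_0}\cap B_{R_0}$, giving $u_0 \in \mathcal{S}(\Omega_{L_0,R_0},d^{\gamma})$. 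The main obstacle, as already indicated, is producing the uniform boundary decay: without $2-\alpha+\gamma>0$ the right-hand side $|x|^{\gamma}$ is too singular for any polynomial barrier vanishing on the enclosing cone to dominate it, and the whole compactness argument breaks down.
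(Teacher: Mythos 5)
Your overall strategy (graph compactness for the domains, a cone barrier for uniform boundary decay, interior estimates plus Arzel\`a--Ascoli) is genuinely different from the paper's, which deduces this lemma from the more general Lemma \ref{l:3conv} and proves that one by citing uniform H\"older continuity up to the boundary for right-hand sides in $L^q$, $q>n/2$ (Theorem 8.27 of \cite{gt01}); your barrier is instead modeled on Lemma \ref{l:seq}. The problem is that Lemma \ref{l:seq} handles a right-hand side $|x|^{\gamma}$ measured from the cone vertex, whereas membership in $\mathcal{S}(\Omega_{L_k,R_k},d^{\gamma})$ only gives $|\Delta u_k|\leq \mathrm{dist}(x,\partial\Omega_{L_k})^{\gamma}$. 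For $\gamma\geq 0$ one has $\mathrm{dist}(x,\partial\Omega_{L_k})^{\gamma}\leq|x-z|^{\gamma}$ and your comparison is fine. But the lemma allows $\gamma<0$ (for small $M$ the exponent $\alpha$ is close to $1$, so any $\gamma>\alpha-2$ is admissible, and negative $\gamma$ is exactly the interesting regime in Theorem \ref{t:mainrhs}). For $\gamma<0$ the inequality reverses, $\mathrm{dist}(x,\partial\Omega_{L_k})^{\gamma}\geq|x-z|^{\gamma}$, and the right-hand side blows up along all of $\partial\Omega_{L_k}\cap B_{\rho}(z)$, not just at $z$. Your barrier $w(\cdot-z)=K|x-z|^{\alpha_{-M}-2\delta}f_{\delta}$ has bounded Laplacian near lateral boundary points at distance comparable to $\rho$ from $z$, so $\Delta\bigl(w(\cdot-z)\mp u_k\bigr)\leq 0$ fails there no matter how $K$ and $\delta$ are chosen, and the maximum-principle comparison you invoke is not justified. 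This is a genuine gap rather than a technicality: a single vertex-centered homogeneous barrier cannot absorb a right-hand side that is singular along the entire boundary.

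The paper's route avoids this by observing that $\mathrm{dist}(\cdot,\partial\Omega)^{\gamma}$ lies in $L^{q}$ with $q>n/2$ near a Lipschitz boundary for the admissible range of $\gamma$ (this is the role of the requirement $\gamma>-2/n$ in Section \ref{s:div}), and then appealing to boundary H\"older estimates for such right-hand sides under an exterior cone condition; equicontinuity up to the boundary then comes directly and your Arzel\`a--Ascoli step goes through unchanged. To salvage your barrier argument for $\gamma<0$ you would need an extra barrier controlling the near-boundary contribution (for instance the solution of $\Delta\psi=-\mathrm{dist}^{\gamma}$ with zero data, estimated through the $L^{q}$ theory), at which point you are essentially reproving the cited estimate. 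A smaller point: in case (1) you use Lemma \ref{l:control} and the interior Harnack inequality to propagate the bound $\sup_{B_{1/2}}u_k\leq1$ to larger balls, but Lemma \ref{l:control} is stated for harmonic functions; for $u_k$ with $|\Delta u_k|\leq d^{\gamma}$ you need the inhomogeneous versions (in the spirit of Lemma \ref{l:3control} and the Harnack inequality with a right-hand-side term), and this should be said explicitly.
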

 
 We give later a proof of the more general Lemma \ref{l:conv} in Section \ref{s:div} that implies Lemma \ref{l:conv}.

 \begin{lemma}  \label{l:contr}
  Let $u\in \mathcal{S}(\Omega_L)$ with $u\geq 0$.  Let $L<M$ and let 
  $\alpha_1$ be the degree of homogeneity for $u_{M}$ and $\beta$ the degree of homogeneity for $u_{-M}$. 
   There exists constants $c_1,c_2$ depending only on $n,M,$ and $M-L$ such that for any 
   $x \in \partial \Omega_L \cap B_{1/2}$
   \[
    \begin{aligned}
     &(1)\sup_{B_{r}(x)} u \geq c_1 u(e_n/2) r^{\alpha_1} , \\ 
     &(2)\sup_{B_{r}(x)} u \leq c_2 u(e_n/2) r^{\beta} ,
    \end{aligned}
   \]
   for any $r \leq 1/4$. 
 \end{lemma}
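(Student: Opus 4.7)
Fix $x_0 \in \partial \Omega_L \cap B_{1/2}$. Because $g$ is Lipschitz with constant $L<M$, one has the strict interior/exterior cone sandwich
\[
 x_0 + (\C_M \cap B_{1/4}) \ \subset\ \Omega_L \cap B_{1/4}(x_0)\ \subset\ x_0 + (\C_{-M} \cap B_{1/4}),
\]
with a positive angular gap between $\C_M$ and $\C_L$, and between $\C_{-L}$ and $\C_{-M}$, controlled by $M-L$. The strategy is to use $u_M(\,\cdot-x_0)$ as an interior barrier from below and $u_{-M}(\,\cdot-x_0)$ as an exterior barrier from above, closing each comparison by the maximum principle and Lemma \ref{l:control}.

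For (1), set $D := (x_0+\C_M)\cap B_{1/4}(x_0)\subset\Omega_L$ and consider $\phi_\epsilon := u - \epsilon\,u_M(\,\cdot-x_0)$, which is harmonic on $D$. On the lateral boundary $\partial\C_M\cap B_{1/4}(x_0)$ the barrier vanishes and $u\ge 0$, so $\phi_\epsilon\ge 0$ there. Points on the spherical cap $\partial B_{1/4}(x_0)\cap(x_0+\C_M)$ sit at distance at least $\tfrac14\sin(\theta_L-\theta_M)$ from $\partial\Omega_L$ (this is where $M-L>0$ is used), so an interior Harnack chain in $\Omega_L$ from $e_n/2$ to the cap, of length bounded in terms of $n$ and $M$, yields $u\ge c(n,M)\,u(e_n/2)$ on the cap. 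Since $u_M \le (1/4)^{\alpha_1}\|f_1\|_\infty$ there, choosing $\epsilon := c(n,M)\,u(e_n/2)/[(1/4)^{\alpha_1}\|f_1\|_\infty]$ makes $\phi_\epsilon\ge 0$ on $\partial D$, hence in $D$. Evaluating at $x_0+re_n$ for $r\le 1/4$ gives
\[
 \sup_{B_r(x_0)} u \ \ge\ u(x_0+re_n)\ \ge\ \epsilon\,r^{\alpha_1} f_1(e_n) \ =\ c_1\,u(e_n/2)\,r^{\alpha_1}.
\]

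For (2), set $E := \Omega_L\cap B_{1/4}(x_0)$ and consider $\psi_C := C\,u_{-M}(\,\cdot-x_0) - u$, harmonic on $E$. On $\partial\Omega_L\cap B_{1/4}(x_0)$ one has $u=0$ and $u_{-M}\ge 0$, so $\psi_C\ge 0$. On $\Omega_L\cap\partial B_{1/4}(x_0)$, the containment $\Omega_L\subset x_0+\C_{-L}$ places these points in a region whose angular projection is compactly contained in $\C_{-M}\cap\mathbb S^{n-1}$, so the angular profile of $u_{-M}$ is bounded below by $c(n,M,M-L)>0$, giving $u_{-M}(\,\cdot-x_0)\ge c(1/4)^{\beta}$; meanwhile Lemma \ref{l:control} gives $u\le C'u(e_n/2)$ on the same cap. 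Choosing $C := C'u(e_n/2)/[c(1/4)^{\beta}]$ yields $\psi_C\ge 0$ on $\partial E$, hence in $E$ by the maximum principle, and thus for $r\le 1/4$,
\[
 \sup_{B_r(x_0)} u \ \le\ C\,\sup_{B_r(x_0)} u_{-M}(\,\cdot-x_0)\ =\ C\,r^{\beta}\|u_{-M}\|_{L^\infty(\partial B_1\cap\C_{-M})}\ =\ c_2\,u(e_n/2)\,r^{\beta}.
\]

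The main obstacle I anticipate is making the constants in the two steps depend only on $n$, $M$, and $M-L$ rather than on finer geometric data of $\Omega_L$: this is precisely what the strict inequality $L<M$ buys us, by forcing a uniform angular gap that prevents the caps (on which the interior Harnack chain and the lower bound for $u_{-M}$ live) from approaching the boundary of the relevant comparison cone.
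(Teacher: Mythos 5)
Your proposal is correct and follows essentially the same route as the paper: for (1) you compare $u$ from below with the translated homogeneous solution $u_M(\cdot-x_0)$ on the interior cone $(x_0+\C_M)\cap B_{1/4}(x_0)$, normalizing the barrier on the spherical cap via Harnack-chain/Lemma \ref{l:control} bounds, exactly as in the paper; for (2) you carry out the exterior comparison with $u_{-M}(\cdot-x_0)$ that the paper only sketches with ``in a similar manner,'' and your use of the angular gap (so that $\C_{-L}\cap\mathbb S^{n-1}$ is compactly contained in $\C_{-M}\cap\mathbb S^{n-1}$) is the right way to make the constants depend only on $n$, $M$, $M-L$. The only cosmetic point is that the Harnack-chain constant in step (1) also depends on $M-L$ (through the distance $\tfrac14\sin(\theta_L-\theta_M)$ you identify), which is consistent with the dependence stated in the lemma.
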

 
 \begin{proof}
  Let $x \in \partial \Omega_{L} \cap B_{1/2}$.  From Lemma \ref{l:control} we have 
  $u(y) \geq C_1 u(e_n/2)$ for any $y \in \partial B_{1/4}(x) \cap (\C_M+x)$.  We now consider the translated function $u_{M}(y-x)$ which is unique up to multiplicative 
  constant and homogeneous of degree $\alpha_1$. By multiplying $u_{M}(y-x)$ by a positive
  constant, we may assume that $\sup_{B_{1/4}(x)} u_{M}(y-x)=C_1 u(e_n/2)$. Then $u_M(y-x)\leq u(y)$ on $\partial B_{1/4}(x) \cap (\C_{M}+x)$. 
  Since $u_M(y-x)=0$ on $\partial (\C_M +x)$, then from the comparison principle we conclude that $u_M(y-x)\leq u(y)$ on $B_{1/4}(x) \cap (\C_{M}+x)$. 
  Then
  \[
   \sup_{B_r(x)} u(y) \geq \sup_{B_r(x)} u_{M}(y-x) = C_1 u(e_n/2) r^{\alpha_1},
  \]
  which proves (1). 
  In a similar manner we obtain property (2) by bounding $u$ from above by $u_{-M}$. 
 \end{proof}
 
 \begin{corollary}\label{c:contr} (To Lemma \ref{l:contr})
  Let $u\in \mathcal{S}(\Omega_L)$ with $L<M$, and let $\beta$ be 
  the degree of homogeneity for $u_{-M}$. 
   There exists a constant $C$ depending only on dimension $n,M,$ and $M-L$ such that for any 
   $x \in \partial \Omega_L \cap B_{1/2}$
   \[
     \sup_{B_{r}(x)} |u| \leq C (\sup_{B_{1}} |u|) r^{\beta} ,
   \]
   for any $r \leq 1/4$. 
 \end{corollary}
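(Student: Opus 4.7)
The plan is to adapt the barrier argument of Lemma \ref{l:contr}(2), using $u_{-M}(\cdot - x)$ as a \emph{two-sided} barrier for $u$; this bypasses the positivity assumption and yields the claimed weighted $L^\infty$ estimate directly. The alternative of decomposing $u$ into its harmonic positive and negative parts (via Perron solutions on $\Omega_L \cap B_1$ with boundary data $\max(\pm u,0)$) and applying Lemma \ref{l:contr}(2) to each piece works equally well, but is less in the spirit of the preceding proof.

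Fix $x \in \partial \Omega_L \cap B_{1/2}$ and set $\tilde u(y) := u_{-M}(y-x) = |y-x|^{\beta} f_{-M}\bigl((y-x)/|y-x|\bigr)$. Since $g$ is $L$-Lipschitz with $g(x')=x_n$, any $y \in \Omega_L$ satisfies $y_n - x_n > -L|y'-x'|$, i.e.\ $\Omega_L \subset x + \mathcal{C}_{-L}$. Because $L < M$, the closure $\overline{\mathcal{C}_{-L} \cap \mathbb{S}^{n-1}}$ is a compact subset of the open set $\mathcal{C}_{-M} \cap \mathbb{S}^{n-1}$, so $f_{-M}$ (continuous and positive in $\mathcal{C}_{-M} \cap \mathbb{S}^{n-1}$) admits a lower bound $c_0 = c_0(n,M,M-L) > 0$ on it. In particular $\tilde u$ is a well-defined nonnegative harmonic function on $\Omega_L \cap B_{1/4}(x)$ vanishing on $\partial \Omega_L$, and $\tilde u(y) \geq c_0 |y-x|^{\beta}$ for every such $y$.

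Now write $M_0 := \sup_{B_1} |u|$ and choose $C := M_0/(c_0 (1/4)^{\beta})$. On $\partial \Omega_L \cap B_{1/4}(x)$ both $u$ and $\tilde u$ vanish, so $C\tilde u \pm u = 0$. On $\partial B_{1/4}(x) \cap \Omega_L$ we have $|u| \leq M_0$ and $\tilde u \geq c_0 (1/4)^{\beta}$, so $C\tilde u \pm u \geq 0$. Since $u$ and $\tilde u$ are both harmonic in the bounded Lipschitz domain $\Omega_L \cap B_{1/4}(x)$, the maximum principle gives $C\tilde u \pm u \geq 0$ throughout, so that
\[
 |u(y)| \;\leq\; C\tilde u(y) \;\leq\; C\,\|f_{-M}\|_{\infty}\,|y-x|^{\beta}
\]
for every $y \in \Omega_L \cap B_{1/4}(x)$. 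Taking the supremum over $B_r(x)$ for $r \leq 1/4$ and using $u \equiv 0$ on $B_1 \setminus \Omega_L$ yields $\sup_{B_r(x)} |u| \leq C_1 (\sup_{B_1}|u|)\, r^{\beta}$ with $C_1 = C_1(n,M,M-L)$.

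The only substantive step is the lower bound $f_{-M} \geq c_0 > 0$ on $\overline{\mathcal{C}_{-L} \cap \mathbb{S}^{n-1}}$; this is where the strict inequality $L < M$ enters and is the source of the dependence on $M-L$. Everything else is routine comparison on a bounded Lipschitz domain.
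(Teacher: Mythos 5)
Your argument is correct, but it takes a different route from the paper. The paper's proof is a two-line reduction to the nonnegative case: it introduces the auxiliary harmonic function $v$ in $\Omega_L\cap B_1$ with data $\sup_{B_1}|u|$ on $\partial B_1\cap\Omega_L$ and $0$ on $\Omega_L^c\cap B_1$, observes $|u|\leq v$ by comparison, and then applies Lemma \ref{l:contr}(2) to $v$ together with $v(e_n/2)\leq \sup_{B_1}v$. You instead bypass Lemma \ref{l:contr} and the auxiliary majorant entirely and run the underlying barrier comparison directly and two-sidedly: $C\,u_{-M}(\cdot-x)\pm u\geq 0$ on $\partial(\Omega_L\cap B_{1/4}(x))$, hence inside. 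This is essentially the same barrier the paper uses (implicitly) in the proof of part (2) of Lemma \ref{l:contr}, so the mathematical content is the same; what your version buys is a self-contained proof that never needs $u\geq 0$ and that makes explicit where the constant's dependence on $M-L$ comes from, namely the lower bound $f_{-M}\geq c_0>0$ on $\overline{\C_{-L}}\cap\mathbb S^{n-1}$, which is compactly contained in $\C_{-M}\cap\mathbb S^{n-1}$ precisely because $L<M$. The paper's version is shorter given the lemma already proved and keeps the quantitative barrier analysis in one place.

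One small inaccuracy: $\tilde u=u_{-M}(\cdot-x)$ does \emph{not} vanish on $\partial\Omega_L\cap B_{1/4}(x)$ (it vanishes on $\partial(x+\C_{-M})$ and at the point $x$ only; on the graph of $g$ near $x$ it is strictly positive, indeed $\geq c_0|y-x|^{\beta}$). So your line ``$C\tilde u\pm u=0$ on $\partial\Omega_L\cap B_{1/4}(x)$'' should read ``$C\tilde u\pm u\geq 0$,'' since $u=0$ and $\tilde u\geq 0$ there. Only the inequality is needed for the maximum principle, so the proof is unaffected.
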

 
 \begin{proof}
  We consider the solution 
   \[
     \begin{aligned}
      \Delta v &=0 & &\text{ in } \Omega_L , \\
      v &= 0 & & \text{ on } \Omega_L^c \cap B_1 , \\
      v &= \sup_{B_{1}} u & &\text{ on } \partial B_{1} \cap \Omega_L.  
     \end{aligned}
   \] 
   From Lemma \ref{l:contr} we have that 
   \[
    v \leq C v(0,1/2) r^{\beta} \leq C (\sup_{B_1} v) r^{\beta}. 
   \]
   From the comparison principle we have that $u \leq v$ on $B_1$. By considering $-v$ we obtain a similar bound 
   from below to conclude the proof. 
 \end{proof}
 
 \begin{remark}   \label{r:rescale}
  A rescaling and translation to the origin of Corollary \ref{c:contr} 
  shows that if $L<M$ and $\beta$ is the degree of homogeneity for $u_{-M}$, then 
  if $u \in \mathcal{S}(\Omega_L,R)$ with $R>2$, then 
   \[
    (\sup_{B_1}|u|)R^{\beta} \leq c_2 \sup_{B_R} |u|.  
   \]
 \end{remark}

 \begin{lemma}   \label{l:liousville}
  Let $u,v \in \mathcal{S}(\Omega_{L,\infty})$ with $u,v \geq0$, then $u = cv$ for some 
  constant $c\geq0$. 
 \end{lemma}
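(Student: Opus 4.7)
The plan is to deduce the Liouville statement directly from the standard (scale-invariant) boundary Harnack principle in Lipschitz domains, applied at every dyadic scale. The domain $\Omega_{L,\infty}$ is scale invariant in the sense that the Lipschitz constant of its defining graph is preserved by any rescaling $x \mapsto x/R$, so the BHP constants do not deteriorate with $R$.

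First I would dispose of the trivial case $v\equiv 0$ (take $c=0$), and otherwise conclude from the strong maximum principle that $v>0$ in $\Omega_{L,\infty}$, so that $u/v$ is well-defined. Applying the standard BHP on $\Omega_{L,\infty}\cap B_{2R}$ yields a universal constant $C=C(L,n)$ with
\[
\sup_{\Omega_{L,\infty}\cap B_R}(u/v) \leq C\inf_{\Omega_{L,\infty}\cap B_R}(u/v),
\]
for every $R>0$. Since the infimum is nonincreasing in $R$, this bounds $\sup_{\Omega_{L,\infty}}(u/v)$ by $C\inf_{\Omega_{L,\infty}\cap B_1}(u/v)=:K<\infty$.

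Next I would invoke the oscillation-reduction form of the BHP (which underlies the H\"older regularity of quotients of nonnegative harmonic functions in Lipschitz domains): there exist $\theta\in(0,1)$ and $\rho\in(0,1)$ depending only on $L$ and $n$ with
\[
\operatorname{osc}_{\Omega_{L,\infty}\cap B_{\rho R}}(u/v) \leq \theta\operatorname{osc}_{\Omega_{L,\infty}\cap B_R}(u/v),
\]
for every $R>0$. Iterating $k$ times starting from $R=\rho^{-k}$ down to radius $1$ gives $\operatorname{osc}_{\Omega_{L,\infty}\cap B_1}(u/v)\leq 2K\theta^k\to 0$ as $k\to\infty$, so $u/v$ is constant on $\Omega_{L,\infty}\cap B_1$. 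The identity theorem for harmonic functions (applied to the harmonic function $u-cv$ which vanishes on an open set) then extends $u=cv$, with $c\geq 0$, to all of $\Omega_{L,\infty}$.

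The only delicate point is ensuring the BHP constants $C,\theta,\rho$ are genuinely scale invariant; this is the classical Lipschitz BHP (as established e.g.\ by Caffarelli--Fabes--Mortola--Salsa), so the obstacle is one of bookkeeping rather than of substance. An alternative route, more in the spirit of the compactness methodology elsewhere in the paper, would be a blow-down argument: any subsequential limit of $u(R_kx)/u(R_ke_n)$ and $v(R_kx)/v(R_ke_n)$ produced by Lemma \ref{l:conv} yields two nonnegative harmonic functions on a limit Lipschitz global domain, and one would close the argument by contradicting nonconstancy of the rescaled ratio via a second invocation of the boundary Harnack at the origin.
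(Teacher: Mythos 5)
Your proof is correct, but it takes a genuinely different route from the paper's. The paper's proof of Lemma \ref{l:liousville} is an extremal-constant argument: it sets $w=u+v$, takes the largest $c\ge 1$ with $cu\le w$, chooses points $x_k$ along which $cu(x_k)/w(x_k)\to 1$, and applies the standard boundary Harnack principle once to the nonnegative harmonic pair $w-cu$ and $w$ on balls $B_{r_k}$ with $r_k=\max\{2|x_k|,k\}\to\infty$; the resulting bound $\sup_{B_{r_k}}\frac{w-cu}{w}\le C_1\bigl(1-\frac{cu(x_k)}{w(x_k)}\bigr)\to 0$ forces $w\equiv cu$, i.e.\ $v=(c-1)u$. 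You instead combine two consequences of the same scale-invariant boundary Harnack principle: the global bound $\sup_{\Omega_{L,\infty}}(u/v)\le C\inf_{B_1\cap\Omega_{L,\infty}}(u/v)$, obtained from monotonicity of the infimum in $R$, and the oscillation-decay estimate (the mechanism behind H\"older continuity of quotients, itself proved by applying the boundary Harnack principle at each scale to $u-m_Rv$ and $M_Rv-u$), iterated from radius $\rho^{-k}$ down to radius $1$, which annihilates the oscillation of $u/v$ at unit scale; unique continuation for harmonic functions then propagates $u=cv$ to all of the connected domain. Both arguments rest on the fact that rescaling preserves the Lipschitz constant, so the constants do not degrade at large scales; the paper's version needs only the raw comparability of two nonnegative solutions (at the price of the auxiliary function $w=u+v$, introduced so that the extremal constant exists and is at least $1$), while yours needs the quantitative oscillation decay but avoids any touching or extremality considerations and is closer in spirit to standard Liouville theorems for ratios. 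One cosmetic caveat: disposing of $v\equiv 0$ by ``take $c=0$'' is not literally right when $u\not\equiv 0$; in that degenerate case the correct reading (also implicit in the paper's proof, which really shows that one of the two functions is a constant multiple of the other) is to swap the roles of $u$ and $v$.
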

 
 \begin{proof}
  Consider $w=u+v$. Then $w \geq u$. Let $c \geq 1$ be the largest constant such that $cu \leq w$. Then there exists a sequence $\{x_k \} \in \Omega_{L,\infty}$ such that 
   \begin{equation}  \label{e:lio1}
    \lim_{k \to \infty} \frac{c u(x_k)}{w(x_k)} = 1. 
   \end{equation}
   We now invoke the standard boundary Harnack principle for Lipschitz domains on the nonnegative harmonic functions $w-cu$ and $w$.  There exists $C_1>0$ such that 
   \begin{equation}   \label{e:lio2}
    \sup_{B_{r_k}} \frac{w-cu}{w} \leq C_1 \inf_{B_{r_k}} \frac{w-cu}{w} \leq C_1 \left(1-\frac{cu(x_k)}{w(x_k}\right), 
   \end{equation}
   with $r_k := \max \{2|x_k|, k\}$. From \eqref{e:lio1} the right hand side of \eqref{e:lio2} goes to zero. Then $w\equiv cu$, and the result follows.  
 \end{proof}

 \begin{lemma}  \label{l:lio2}
  Let  $u,v \in \mathcal{S}(\Omega_{L,\infty})$ with $u\geq 0$. If there exist  constants $C,R>0$ such that 
   \[
    \sup_{B_r} |v| \leq  C\sup_{B_r} u  \quad \text{ for } r\geq R,
   \]
   then $v = cu$ for some $c \in \R$. 
 \end{lemma}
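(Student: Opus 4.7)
The plan is to upgrade the hypothesis from the supremum bound $\sup_{B_r}|v|\le C\sup_{B_r}u$ to a pointwise inequality $|v|\le Mu$ on all of $\Omega_{L,\infty}$, at which point Lemma \ref{l:liousville} applied to the nonnegative solution $v+Mu\in\mathcal{S}(\Omega_{L,\infty})$ delivers $v+Mu=cu$ for some $c\ge 0$ and hence $v=(c-M)u$. The standard boundary Harnack principle only compares nonnegative harmonic functions, so the main difficulty is replacing the signed $v$ by a nonnegative harmonic majorant of the same order of magnitude.

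To bridge this gap, for each $R\ge R_0$ I would introduce $\phi_R$, the harmonic function in $\Omega_{L,\infty}\cap B_R$ with boundary data $|v|$ on $\partial B_R\cap\Omega_{L,\infty}$ and $0$ on $\partial\Omega_{L,\infty}\cap B_R$. Since $|v|$ is subharmonic in the interior and vanishes on $\partial\Omega_{L,\infty}$, the maximum principle gives $\phi_R\ge|v|\ge 0$. Now $\phi_R$ and $u$ are both nonnegative harmonic in $\Omega_{L,\infty}\cap B_R$ and vanish on $\partial\Omega_{L,\infty}\cap B_R$, so the standard boundary Harnack principle on Lipschitz domains yields
\[
\sup_{\Omega_{L,\infty}\cap B_{R/2}}\frac{\phi_R}{u}\;\le\;C\,\frac{\phi_R(y_R)}{u(y_R)},
\]
with $C=C(L,n)$ scale-invariant, for an interior reference point $y_R\in B_{R/2}\cap\Omega_{L,\infty}$ with $\mathrm{dist}(y_R,\partial\Omega_{L,\infty})\ge c(L)R$ (take, e.g., $y_R=(R/4)e_n$).

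It remains to control the reference ratio on the right. The maximum principle combined with the hypothesis gives $\phi_R(y_R)\le\sup_{\partial B_R\cap\Omega_{L,\infty}}|v|\le C_0\sup_{B_R}u$, while Lemma \ref{l:control} rescaled to scale $R$ gives $\sup_{B_R}u\le C_1 u(y_R)$. Hence $\phi_R(y_R)/u(y_R)\le C_0 C_1$, and combining with the previous display we obtain $|v|/u\le\phi_R/u\le M:=CC_0 C_1$ on $\Omega_{L,\infty}\cap B_{R/2}$ with $M$ independent of $R$. Letting $R\to\infty$ yields $|v|\le Mu$ on all of $\Omega_{L,\infty}$, and Lemma \ref{l:liousville} closes the argument.

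The real obstacle is the passage from sup-control to pointwise control near $\partial\Omega_{L,\infty}$; the deep interior of any fixed ball is handled easily by the interior Harnack inequality, but near the boundary one must feed a genuinely nonnegative object into the standard BHP. The subharmonic majorant $\phi_R$, together with the scale-invariance of the BHP on Lipschitz graph domains, is precisely the mechanism that supplies this.
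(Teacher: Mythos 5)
Your proposal is correct and follows essentially the same route as the paper: both replace the signed $v$ by a nonnegative harmonic majorant (you solve for $\phi_R$ with data $|v|$, the paper solves for $v_r$ with data $v^+$ and treats $v^-$ symmetrically), apply the scale-invariant standard boundary Harnack principle together with Lemma \ref{l:control} at the reference point to get a pointwise bound $|v|\leq Mu$ uniform in the scale, and then conclude via Lemma \ref{l:liousville} applied to a nonnegative combination of $u$ and $v$. The only differences are cosmetic (e.g.\ the paper inserts a doubling step from Lemma \ref{l:contr} before invoking the hypothesis, which your direct use of the hypothesis at scale $R$ renders unnecessary).
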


 \begin{remark}
  The significance of Lemma \ref{l:lio2} is that we do not require $v \geq 0$. 
 \end{remark}
 
 \begin{proof}
  Let $v_r$ satisfy 
  \[
   \begin{aligned}
    \Delta v_r &= 0 \quad \text{ in } B_r \cap \Omega_{L,\infty} \\
    v_r &= v^+ \ \text{ on } \partial (B_r \cap \Omega_{L,\infty}).  
   \end{aligned}
  \]
  By Lemma \ref{l:contr} we have that 
  \[
   \sup_{B_{2r}} |v| \leq C\sup_{B_{r}} |v| \text{ for } r \geq 1,
  \] 
  and some constant $C$ independent of $r$. Then 
  \[
   v_r(r/2,0,\ldots,0) \leq C \sup_{B_r} |v| \leq C \sup_{B_{r/2}} |v| \leq C \sup_{B_{r/2}} u
   \leq C u(r/2,0,\ldots,0)  
   \]
   with the last inequality following form Lemma \ref{l:control}. By the standard boundary Harnack principle 
   \[
    \sup_{B_{r/2}} \frac{v^+}{u} \leq \sup_{B_{r/2}} \frac{v_r}{u} \leq C \frac{v(r/2,0,\ldots,0)}{u(r/2,0,\ldots,0)}\leq C_1. 
   \]
   Since the constants are independent of $r \geq 1$, we conclude that $v^+ \leq C_1 u$ in $\Omega_{L,\infty}$. 
   The same argument holds for $v^-$ so that $|v| \leq C_1 u$ in $\Omega_{L,\infty}$.

  Let $w=C_1u-v\geq0$. Then from Lemma \ref{l:liousville} we have that $w=cu$ for some constant $c$, so that
  $v=(C_1-c)u$.   
 \end{proof}
 
 We will need an improvement over the previous lemma. 
  \begin{lemma}   \label{l:logstop}
   Let  $u,v \in \mathcal{S}(\Omega_{L,\infty})$ with $u\geq 0$. If there exist  constants $C,R>0$ such that 
   \[
    \sup_{B_r} |v| \leq  C(\ln(r+1))\sup_{B_r} u  \quad \text{ for } r\geq 1,
   \]
   then $v = cu$ for some $c \in \R$. 
  \end{lemma}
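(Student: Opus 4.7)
The plan is to prove that the ratio $f := v/u$ is constant on $\Omega_{L,\infty}$ by showing its oscillation on $B_{r_0} \cap \Omega_{L,\infty}$ vanishes for every $r_0$. As a preparatory step, I would first upgrade the hypothesis to the pointwise bound $|v(x)|/u(x) \leq C \ln(|x|+1)$ on $\Omega_{L,\infty}$. This is a direct adaptation of the proof of Lemma \ref{l:lio2}: with $v_r$ the harmonic replacement of $v^+$ in $B_r \cap \Omega_{L,\infty}$, the doubling of $|v|$ from Corollary \ref{c:contr} and Remark \ref{r:rescale}, combined with Lemma \ref{l:control} at a corkscrew point $y_r \sim (r/2)e_n$, gives $v_r(y_r) \leq C \ln(r+1) u(y_r)$; the standard boundary Harnack principle then yields $\sup_{B_{r/2} \cap \Omega_{L,\infty}} v^+/u \leq C \ln(r+1)$, and the same reasoning applied to $v^-$ makes $f$ finite at every interior point, so that $m_r := \inf_{B_r \cap \Omega_{L,\infty}} f$ and $M_r := \sup_{B_r \cap \Omega_{L,\infty}} f$ are finite for each $r$.

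Next I would derive a geometric decay for $\mathrm{osc}_{B_r \cap \Omega_{L,\infty}} f$ at dyadic scales. The functions $v - m_r u$ and $M_r u - v$ are both nonnegative and harmonic on $B_r \cap \Omega_{L,\infty}$ and vanish on $\partial \Omega_{L,\infty} \cap B_r$. Applying the standard boundary Harnack principle to each paired with $u$, expressing the two resulting $\sup \leq C \inf$ inequalities in terms of $m_r, M_r, m_{r/2}, M_{r/2}$, and adding them, I obtain
$$\mathrm{osc}_{B_{r/2} \cap \Omega_{L,\infty}} f \ \leq\ \theta \, \mathrm{osc}_{B_r \cap \Omega_{L,\infty}} f, \qquad \theta := \frac{C-1}{C+1} < 1,$$
with $C$ depending only on dimension and $L$. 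Iterating this $k$ times starting from scale $2^k r_0$, and using the preparatory step to bound the base oscillation by $2 C \ln(2^k r_0 + 1) \leq C'' k$, yields $\mathrm{osc}_{B_{r_0} \cap \Omega_{L,\infty}} f \leq C'' k\, \theta^k \to 0$ as $k \to \infty$. Since this holds for every $r_0$, $f$ is constant on $\Omega_{L,\infty}$, giving $v = cu$.

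The main obstacle is confirming that the boundary Harnack constant $C$ in the oscillation step is genuinely universal, i.e., independent of the scale $r$. This is a standard consequence of the scale invariance of the Lipschitz class and of the Laplacian: the dilation $x \mapsto x/r$ sends $(v - m_r u, u)$ on $B_r \cap \Omega_{L,\infty}$ to a pair of nonnegative harmonic functions on the unit ball intersected with a Lipschitz domain of constant $L$, vanishing on the graph portion of its boundary, so the standard boundary Harnack principle applies with constant depending only on $n$ and $L$. Once this universality is in hand, the geometric factor $\theta^k$ defeats the logarithmic factor $\ln(2^k r_0 + 1) \sim k$, which is precisely the margin by which Lemma \ref{l:logstop} improves on Lemma \ref{l:lio2}.
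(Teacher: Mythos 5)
Your argument is correct, but it follows a genuinely different route from the paper. The paper proves Lemma \ref{l:logstop} as a special case of Theorem \ref{t:logstop}: assuming $v\neq cu$, it takes the concave envelope $g$ of $h(R)=\sup_{B_R}|v|/u$, uses that $g$ is slowly varying (this is where the logarithm enters), blows down along radii $R_k$ where $h(R_k)=g(R_k)$, extracts limits $w_1,w_2$ by compactness, applies the Liouville lemma (Lemma \ref{l:genliousville}, i.e.\ the analogue of Lemma \ref{l:lio2}) to get $w_2=cw_1$, and then derives a contradiction at a fixed point via the standard boundary Harnack principle. You instead first bootstrap the hypothesis into the pointwise bound $|v|/u\lesssim \ln(|x|+1)$ by rerunning the harmonic-replacement argument of Lemma \ref{l:lio2} (using Corollary \ref{c:contr}/Remark \ref{r:rescale} and Lemma \ref{l:control}), and then run the classical oscillation-decay iteration for the ratio $v/u$ at dyadic scales, applying the scale-invariant boundary Harnack principle to $v-m_r u$ and $M_r u-v$; the geometric factor $\theta^k$ then beats the $O(k)$ logarithmic bound on the base oscillation. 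The key point you correctly verify is the scale-uniformity of the boundary Harnack constant on $\Omega_{L,\infty}$, which the paper itself implicitly relies on at large radii in Lemmas \ref{l:liousville} and \ref{l:lio2}; the only trivial cases to dispose of are $u\equiv0$ and scales at which the oscillation already vanishes (strong maximum principle). Your route is more elementary in that it avoids blow-down limits and compactness entirely, and it transparently yields the stronger conclusion recorded in the paper's remark after Theorem \ref{t:logstop} (indeed growth of $v/u$ of order $o(|x|^{\delta_0})$ for a fixed $\delta_0>0$ determined by the boundary Harnack constant suffices). The paper's compactness argument, by contrast, is formulated so as to carry over verbatim to divergence-form operators with merely bounded measurable coefficients, where it passes to limiting operators and domains; your iteration could also be adapted there since the boundary Harnack constant of \cite{cfms81} depends only on ellipticity and the Lipschitz character, but the paper's choice makes that generalization automatic.
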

 
 In Section \ref{s:div} we give a  proof of a more general result in Theorem \ref{t:logstop}.

 %
%

    \begin{theorem}   \label{t:mainrhs}
      Let $0 \in \partial \Omega_{L,2}$ with $L<M$, and fix $x^0 \in \Omega_L$. Assume further that 
      $B_1 \cap \{x_n > 1/4\} \subseteq \Omega_{L,2}$. Let $\alpha_1$ be the degree of 
      homogeneity for $u_M$. 
      Let $u,v \geq 0$ and $u,v \in \mathcal{S}(\Omega_{L,2},d^{\gamma})$ with 
      $\Delta u, \Delta v \leq 0$ and $u(x^0)=v(x^0)=1$, and assume that $2-\alpha_1+\gamma>0$. 
      Then there exists a uniform constant $C>0$ 
      (depending only on dimension $n$, 
      Lipschitz constant $M$, $M-L$, and $\text{dist}(x^0,\partial \Omega_{L,2})$) 
      such that 
       \begin{equation}  \label{e:ine}
        C^{-1} v(x) \leq u(x) \leq Cv(x) ,
       \end{equation}
       for all $x \in B_{1/2}$. 
    \end{theorem}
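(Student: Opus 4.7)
The proof adapts the dyadic blow-up argument from the proof of Theorem \ref{t:cone} to the Lipschitz setting. The role of Lemma \ref{l:seq} (compactness on a fixed cone) is played by Lemma \ref{l:conv} (compactness for Lipschitz domains of bounded constant), while the cone-specific Fourier expansion \eqref{expansion} that produced the final contradiction in the cone case is replaced by the Liouville-type Lemma \ref{l:logstop}. By symmetry between $u$ and $v$ it suffices to prove $v \leq Cu$ on $B_{1/2}$; interior Harnack combined with $u(x^0)=v(x^0)=1$ already bounds $v/u$ on any compact subset of $\Omega_{L,2}$ at positive distance from the boundary. Suppose the bound fails; then there is a sequence $y_m \in B_{1/2}$ with $v(y_m)/u(y_m) \to \infty$ and $\operatorname{dist}(y_m,\partial \Omega_{L,2}) \to 0$. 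Passing to subsequences, the nearest boundary points accumulate at some $z_\infty \in \partial \Omega_{L,2} \cap \overline{B_{1/2}}$; after translating so that $z_\infty = 0$, we focus on dyadic radii $r_k = 2^{-k}$ and a fixed interior reference point $x^* \in \Omega_L$ for which $r_k x^*$ stays uniformly inside $\Omega_L$, for instance $x^* = e_n/2$.

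\textbf{Dyadic telescoping.} Following the cone proof, form
\[
w_r(x) := \frac{v(rx) - \lambda(r)\, u(rx)}{A_r}, \qquad \lambda(r) := \frac{v(r x^*)}{u(r x^*)}, \qquad A_r := \sup_{\Omega_{L,2/r} \cap B_1}\bigl| v(rx) - \lambda(r)\, u(rx) \bigr|,
\]
so that $\|w_r\|_{L^\infty(\Omega_{L,2/r}\cap B_1)} = 1$ and $w_r(x^*)=0$. Failure of the one-point estimate $\lambda(r_k) \leq C$ for a uniform $C$ forces the rescaled sequence $a_k := A_{r_k} r_k^{-\alpha_1}$ to satisfy $\sum_k a_k = +\infty$, and Lemma \ref{l:sumdiv} supplies a subsequence $k_l$ along which the telescoping \eqref{e:seq1}--\eqref{e:seq3} can be rerun -- with the explicit cone comparator $r^{\alpha_1}f_1(\theta)$ replaced throughout by the uniform two-sided bounds $c_1 r^{\alpha_1} \leq \sup_{B_r(x)} u \leq c_2 r^\beta$ of Lemma \ref{l:contr} and Corollary \ref{c:contr} -- yielding the growth bound $\sup_{B_{2^j}}|w_{r_{k_l}}| \leq C j \cdot 2^{j\alpha_1}$. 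Simultaneously, the hypothesis $2-\alpha_1+\gamma > 0$ forces $|\Delta w_{r_{k_l}}(x)| \leq C r_{k_l}^{2-\alpha_1+\gamma}[\ln(1/r_{k_l})]^2 |x|^\gamma \to 0$ locally uniformly.

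\textbf{Compactness and Liouville contradiction.} The rescaled domains $\Omega_{L,2/r_{k_l}}$ are Lipschitz with constant $\leq M$ and radius tending to infinity; Lemma \ref{l:conv} extracts a subsequential limit domain $\Omega_{L_0,\infty}$ with $L_0 \leq M$ and a limit $w_0$ that is harmonic in $\Omega_{L_0,\infty}$, vanishes on $\partial \Omega_{L_0,\infty}$, satisfies $w_0(x^*)=0$ and $\sup_{B_1 \cap \Omega_{L_0,\infty}}|w_0|=1$, and obeys $|w_0(x)| \leq C|x|^{\alpha_1}\ln(|x|+1)$ for $|x|\geq 1$. Applying Lemma \ref{l:conv} in parallel to the rescaled profiles $u(r_{k_l}x)/u(r_{k_l}x^*)$ produces a nonnegative $u_0 \in \mathcal{S}(\Omega_{L_0,\infty})$ with $u_0(x^*)=1$ and $\sup_{B_R}u_0 \geq cR^{\alpha_1}$ for $R\geq 1$; hence $\sup_{B_R}|w_0| \leq C\ln(R+1)\sup_{B_R} u_0$ for $R\geq 1$, and Lemma \ref{l:logstop} forces $w_0 = c\, u_0$ for some $c \in \R$. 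The conditions $w_0(x^*)=0$ and $u_0(x^*)=1$ yield $c=0$, hence $w_0 \equiv 0$, contradicting $\sup_{B_1}|w_0|=1$. This establishes the one-point comparison $v(r_k x^*) \leq C u(r_k x^*)$; the full bound \eqref{e:ine} then follows by interior Harnack at points of $\Omega_L \cap B_{1/2}$ bounded away from $\partial \Omega_L$ and by the boundary Harnack with right-hand side \eqref{e:ds} near other boundary points, mirroring the concluding paragraphs of the proof of Theorem \ref{t:cone}.

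\textbf{Main obstacle.} The decisive technical difficulty lies in the dyadic telescoping: unlike in the cone case, the rescaled domains depend on $l$, so the explicit homogeneous profile $r^{\alpha_1}f_1(\theta)$ is unavailable and must be replaced throughout by the uniform two-sided estimates of Lemma \ref{l:contr} and Corollary \ref{c:contr}. Verifying that these substitutes propagate through the triangle-inequality estimates \eqref{e:seq1}--\eqref{e:seq3} with constants independent of $l$ -- and thereby preserve the log-growth bound that feeds Lemma \ref{l:logstop} -- is the heart of the proof.
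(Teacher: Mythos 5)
Your high-level ingredients are the right ones (dyadic telescoping, Lemma \ref{l:sumdiv}, the compactness Lemma \ref{l:conv}, Lemma \ref{l:contr}, and Lemma \ref{l:logstop} in place of the eigenfunction expansion), but the proposal has genuine gaps. First, you run the contradiction for a single fixed pair $(u,v)$ on a single domain, which at best yields a constant depending on that particular pair and domain; the theorem asserts a constant depending only on $n$, $M$, $M-L$ and $\mathrm{dist}(x^0,\partial\Omega_{L,2})$, and the paper gets this by letting the contradiction sequence range over pairs $u_k\in\mathcal{S}(\Omega_{L_k,2})$, $v_k\in\mathcal{S}(\Omega_{L_k,2},d^{\gamma})$ on varying domains, with bad points $x_k$ satisfying $ku_k(x_k)<v_k(x_k)$. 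Second, and more seriously, your concluding step --- upgrading the one-point estimate along the ray $z_\infty+r_kx^*$ to the full bound \eqref{e:ine} via interior Harnack plus the De Silva--Savin estimate \eqref{e:ds} ``as in the cone case'' --- fails here: \eqref{e:ds} requires at least $C^{1,\beta}$ boundary, which the cone has on its lateral boundary away from the vertex but a general Lipschitz graph has nowhere. For the same reason your claim that failure of $v\leq Cu$ forces unboundedness of $\lambda(r_k)=v(r_kx^*)/u(r_kx^*)$ along that one fixed ray is unjustified: the bad points $y_m$ need not see the ray, and transferring their largeness to the ray is exactly the missing spreading estimate. The paper sidesteps both issues by normalizing at $y_k=(x_k',e_n)$, the point at unit height above the bad point itself, so the blow-up directly captures arbitrary bad points near arbitrary boundary points and no ray-to-neighborhood upgrade is ever needed.

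Third, your bookkeeping through the explicit exponent $\alpha_1$ breaks down. On the blow-up limit $\Omega_{L_0,\infty}$ the nonnegative harmonic function $u_0$ only satisfies the growth lower bound $\sup_{B_R}u_0\geq cR^{\beta}$ with $\beta$ the homogeneity of $u_{-M}$ (Remark \ref{r:rescale}); the bound $\sup_{B_R}u_0\geq cR^{\alpha_1}$ you invoke is false in general (if the limit domain is a half-space, $u_0$ grows linearly while $\alpha_1>1$), so the passage from $|w_0(x)|\leq C|x|^{\alpha_1}\ln(|x|+1)$ to the hypothesis $\sup_{B_R}|w_0|\leq C\ln(R+1)\sup_{B_R}u_0$ of Lemma \ref{l:logstop} does not follow. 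Relatedly, weighting by $r_k^{-\alpha_1}$ in $a_k$ does not telescope cleanly, since $\sup_{B_r}u$ and $r^{\alpha_1}$ differ across scales by factors as large as $r^{\beta-\alpha_1}$. The paper's proof avoids this by defining $a_{k,m}$ with the weight $\sup_{B_{2^{-m}}}u_k$, taking $b_m=\sup_k a_{k,m}$ (finite by the standard boundary Harnack principle), selecting $k=k(m_l)$ after applying Lemma \ref{l:sumdiv} to $\{b_m\}$, and carrying the growth bound in the form $|w(x)|\leq Cu(x)\ln(|x|+1)$, i.e.\ measured against the blow-up limit $u$ itself, which is precisely the form Lemma \ref{l:logstop} needs.
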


   \begin{proof}
    It will suffice to assume $\Delta u=0$ in $\Omega_{L,2}$ and show that $v \leq Cu$ in $B_{1/2}$. 
    We initially prove the theorem for a fixed $x^0 \in \C_M \cap \partial B_{1/2}$.

    Suppose by way of contradiction that the theorem is not true. 
    Then there exists $u_k \in \mathcal{S}(\Omega_{L_k,2})$ and $v_k \in \mathcal{S}(\Omega_{L_k,2}, d^{\gamma})$
    with $\Delta v_k \leq 0$ and $v_k(x^0)=u_k(x^0)=1$ and points $x_k \in \Omega_{L_k,2}\cap B_{1/2}$ such that 
       \begin{equation}  \label{e:explode}
     ku_k(x_k) < v_k(x_k). 
    \end{equation}
    Harnack chains work on Lipschitz domains; therefore, from the interior Harnack inequality,  by multiplying $v_k$ and $u_k$ by constants (uniformly bounded above and below), 
    we may assume that 
    \begin{equation}  \label{e:lateral}
    v_k(x_k',e_n)=u_k(x_k',e_n)=1. 
    \end{equation}
    Because of the interior Harnack inequality, in order for \eqref{e:explode} to occur, 
    it is necessary that dist$(x_k, \partial \Omega_{L_k}) \to 0$. We let $y_k = (x_k', e_n)$, and
    similar to the proof of Theorem \ref{t:cone} we define 
     \[
   w_{r,k}(x) := \frac{v_k(rx)-\frac{v_k(r y_k)}{u_k(r y_k)}u_k(rx)}{
    \sup_{B_1 \cap \mathcal{C}} |v_k(rx)-\frac{v_k(ry_k)}{u_k(ry_k)}u_k(rx)|},
   \]
   and 
    \[
    a_{k,m} := \sup_{B_1 \cap \mathcal{C}} |v_k(2^{-m}x)-\frac{v_k(2^{-m}y_k)}{u_k(2^{-m}y_k)}u_k(2^{-m}x)|
        \sup_{B_{2^{-m}}} u_k. 
    \]
    For fixed $m$, it follows by the standard boundary Harnack Principle that 
    \[
     b_m := \sup_k a_{k,m} < \infty. 
    \]
    Just as in the proof of Theorem \ref{t:cone}, by the assumption \eqref{e:explode} we necessarily obtain that
    \[
     \sum b_m = \infty. 
    \]
    From Lemma \ref{l:sumdiv} there exists a subsequence $b_{m_l}$ such that for any fixed $j$ we have
    \[
     \limsup_{m_l \to \infty} \frac{\sum_{i=1}^j b_{m_l -i}}{b_{m_l}}  \leq j. 
    \]
    For any $N>0$ with $N \in \mathbb{N}$, there exists $\tilde{N} \in \mathbb{N}$, such that if $m_l\geq \tilde{N}$, then there is a $k=k(m_l)$ such that $a_{k,m_l}$ satisfies 
    \begin{equation} \label{e:kml}
     \frac{\sum_{i=1}^j a_{k,m_l -i}}{a_{m_l}}  \leq C j \text{ for } j\leq N. 
    \end{equation}  
    If we let $r_k = 2^{-m_l}$, then for those chosen $k$ we have that 
   \[
    \sup_{B_{2^j}} |w_{r_k,k}(x)| 
    = \frac{\sup_{B_{2^j}} |v_k(r_k x) - \frac{v( r_k y_k)}{u( r_k y_k)}u(r_k x)|}{
       \sup_{B_{r_k}} u_k},          
   \]
    and 
     \[
   \begin{aligned}
   & \left| \frac{v_k(2^{j-i} r_k y_k)}{u_k(2^{j-i} r_k y_k)}u_k(2^{j-i-1}r_k y_k) - \frac{v_k(2^{j-i-1} r_k y_k)}{u_k(2^{j-i-1} r_k y_k)}u_k(2^{j-i-1}r_k y_k) \right| \\
   &\  = \left| \frac{v_k(2^{j-i} r_k y_k)}{u_k(2^{j-i} r_k y_k)}u_k(r_k 2^{j-i-1}y_k) - v(2^{j-i-1} r_k y_k) \right|  \\
   & \leq a_{k,m_l-(j-i)}  \sup_{B_{2^{j-i} r_k}} u_k. 
    \end{aligned}
   \]
   Also we have that 
    \begin{equation}  \label{e:seq3l}
   \begin{aligned}
    &\sup_{B_{2^j}} \left| \frac{v_k(2^{j-i} r_k y_k)}{u_k(2^{j-i} r_k y_k)}u_k(r_k x) - \frac{v_k(2^{j-i-1} r_k y_k)}{u_k(2^{j-i-1} r_k y_k)}u(r_k x) \right| \\
    &=  \sup_{B_{2^j}} \frac{u_k(r_k x)}{u_k(2^{j-i-1} r_k y_k)}  \\
    & \quad \times 
    \left| \frac{v_k(2^{j-i} r_k y_k)}{u_k(2^{j-i} r_k y_k)}u_k(2^{j-i-1}r_k y_k) - \frac{v_k(2^{j-i-1} r_k y_k)}{u_k(2^{j-i-1} r_k y_k)}u_k(2^{j-i-1}r_k y_k) \right|  \\   
    & \leq \sup_{B_{2^j}}\frac{u_k(2^j r_k x)}{u_k(2^{j-i-1} r_k y_k)} a_{m_l-(j-i)} \sup_{B_{2^{j-i} r_k}} u_k  \\
    & \leq C a_{m_l-(j-i)} \frac{u_k(2^j r_k y_k)}{u_k(2^{j-i-1} r_k y_k)} u_k(2^{j-i}r_k) u_k(y_k) \\
    & \leq C a_{m_l-(j-i)} u_k(2^j r_k y_k),
   \end{aligned}
   \end{equation} 
   with the last inequality following from the bounds in Lemma \ref{l:contr}. Then combining estimates \eqref{e:kml}
   and \eqref{e:seq3l} we obtain 
   \begin{equation}  \label{e:loggrowth}
   \sup_{B_{2^j}} |w_{r_k,k}(x)| \leq C j \frac{\sup_{B_{2^j}} u_k(r_k x)}{\sup_{B_1} u_k(r_k x)}.  
   \end{equation}
    
    We now use Lemma \ref{l:conv} as $r_k \to 0$ to obtain limiting functions and domains. 
   By  choosing a subsequence we first consider the limit function 
   \[
    u = \lim_{r_k \to 0} \frac{u_k(r_k x)}{\sup_{B_{r_k}} u_k}.
   \] 
  We also obtain a limiting global Lipschitz domain $\Omega$ on which $u$ is the unique (up to multiplicative constant) nonnegative harmonic function that vanishes on the boundary. A further subsequence guarantees $y_k \to y_0 \in \Omega$. 
   As in the proof of Theorem \ref{t:cone}, as $r_k \to 0$ we have that $|\Delta w_{r_k,k}| \to 0$. Then 
   by picking a further subsequence, as $r_k \to 0$ we obtain a limiting global Lipschitz domain function $w$ with the following properties
    \[
      \begin{aligned}
       &(i) \quad \Delta w =0 \text{ in } \Omega \text{ and } w=0 \text{ on }  \Omega^c  ,\\
       &(ii) \quad \sup_{B_1} |w| =1&\  ,\\
       &(iii) \quad w(y^0) =0  ,  \\
       &(iv)  \quad w(x) \leq C u(x) \ln(|x|+1) \ \text{ for } |x| \geq 1\quad \text{ from }   \eqref{e:loggrowth}.   
       \end{aligned}
     \] 
    From property $(iv)$ Lemma \ref{l:logstop} guarantees $w(x)=cu(x)$, but then both properties $(ii)$ and $(iii)$
    cannot hold since $u(y^0)>0$. 
    
     An interior Harnack inequality with a Harnack chain will also give the result for 
     $x^0 \in B_{1/2}\cap \Omega_L$ and the constant $C$ depending on dist$(x^0,\partial \Omega_L)$. 
     \end{proof}

 \begin{remark}
  If one does not assume that $B_1 \cap \{x_n > 1/4\} \subseteq \Omega_L$, then one may  modify the proof
  at the expense that \eqref{e:ine} holds for $x \in B_r$ with $r \leq \min\{(2L)^{-1},1/2\}$. 
 \end{remark}
 
 \begin{corollary}    \label{c:mainrhs}
  Let $u,v,\Omega$ have the same assumptions as in Theorem \ref{t:mainrhs} with the exception that $v$ is no longer required to satisfy $\Delta v\leq 0$ and $v\geq0$. 
  Assume that $\gamma=0$ and $M$ is small enough such that $2-\alpha_1>0$. Then there exists $\beta>0$ depending on $M$ and $M-L$ such that 
   \begin{equation}  \label{e:hest}
    \left\| \frac{v}{u} \right\|_{C^{0,\beta}(B_{1/2}\cap \Omega_L)}\leq C \frac{\left (\| v \|_{L^{\infty}(\Omega_L)} + \| f \|_{L^{\infty}(\Omega_L)}\right)}{u(re_n/2)}.
   \end{equation}
 \end{corollary}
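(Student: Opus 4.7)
The plan is to bootstrap Theorem \ref{t:mainrhs} into a Hölder estimate for $v/u$ in two stages: first an $L^\infty$ bound via decomposition, then boundary oscillation decay. For the sup bound I would split $v = h + p$ in $\Omega_L \cap B_1$, where $h$ is harmonic with $h = v$ on $\partial(\Omega_L \cap B_1)$ and $p$ solves $\Delta p = f$ with $p = 0$ on the boundary. The ratio $h/u$ is controlled by the standard boundary Harnack principle (no right hand side). Decomposing $p = p_1 - p_2$ with $\Delta p_1 = -f^-$ and $\Delta p_2 = -f^+$, each $p_i\geq 0$ is nonnegative and superharmonic with bounded right-hand side, so Theorem \ref{t:mainrhs} yields $p_i \leq C u$. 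Combined, this gives $|v|/u \leq C(\|v\|_\infty + \|f\|_\infty)/u(re_n/2)$ on $B_{1/2}\cap\Omega_L$.

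For the Hölder seminorm I would prove boundary oscillation decay. Fix $x_0 \in \partial\Omega_L \cap B_{1/2}$, set $a_r = \inf_{B_r(x_0)\cap\Omega_L} v/u$ and $b_r = \sup_{B_r(x_0)\cap\Omega_L} v/u$, and write $\omega(r) := b_r - a_r$. Introduce the barrier $\Psi_r \geq 0$ solving $\Delta\Psi_r = -\|f\|_\infty$ in $B_r(x_0)\cap\Omega_L$ with zero boundary values, so $\Psi_r \leq C\|f\|_\infty r^2$ by the maximum principle. Define
\[
V_1 := v - a_r u + \Psi_r, \qquad V_2 := b_r u - v + \Psi_r.
\]
Both $V_1, V_2$ are nonnegative, superharmonic in $B_r(x_0)\cap\Omega_L$, and vanish on $\partial\Omega_L \cap B_r(x_0)$, so they satisfy the hypotheses of a rescaled Theorem \ref{t:mainrhs}. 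Since $V_1 + V_2 = \omega(r)u + 2\Psi_r$, at an interior reference point $y^* \in B_{r/2}(x_0)$ with $\text{dist}(y^*, \partial\Omega_L)\sim r$ at least one of the two ratios $V_i(y^*)/u(y^*)$ is at least half of $\omega(r) + 2\Psi_r(y^*)/u(y^*)$. Applying the rescaled Theorem \ref{t:mainrhs} to that $V_i$ and $u$ propagates this lower bound throughout $B_{r/2}(x_0)$, yielding the recurrence
\[
\omega(r/2) \leq (1-c)\,\omega(r) + C \sup_{B_{r/2}(x_0)\cap\Omega_L} \Psi_r/u.
\]
Lemma \ref{l:contr} provides $u(y^*) \geq c_0 u(re_n/2)\, r^{\alpha_1}$, while $\Psi_r(y^*) \leq C\|f\|_\infty r^2$, and applying Theorem \ref{t:mainrhs} once more to $\Psi_r$ versus $u$ upgrades this pointwise estimate to $\sup_{B_{r/2}(x_0)} \Psi_r/u \leq C \|f\|_\infty r^{2-\alpha_1}/u(re_n/2)$. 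Because $2-\alpha_1 > 0$ by hypothesis, iterating the recurrence produces $\omega(r) \leq C r^\beta$ for some $\beta>0$, and combined with standard interior Hölder regularity this gives the claimed $C^{0,\beta}$ estimate on $B_{1/2}\cap\Omega_L$.

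The principal technical obstacle is controlling the error $\Psi_r/u$: the barrier has sup-norm of order $r^2$, whereas $u$ decays like $r^{\alpha_1}$ at interior reference points, so the ratio decreases only like $r^{2-\alpha_1}$. The smallness assumption on $M$ forcing $\alpha_1 < 2$ is exactly what makes this error summable in the geometric iteration. One must also verify that the constants in Theorem \ref{t:mainrhs} remain uniform when rescaled to boundary balls $B_r(x_0)$ of varying orientation, which follows because the Lipschitz constant of $\Omega_L$ is preserved under dilation, at the cost of a routine covering and rotation argument.
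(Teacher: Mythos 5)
Your proposal is correct in substance and follows the same underlying strategy as the paper --- reduce everything to Theorem \ref{t:mainrhs} plus the classical oscillation-decay iteration --- but you execute it differently. The paper introduces a single majorant $w$ with $\Delta w=-1$ in $\Omega_L$, $w=0$ on $\partial\Omega_L\cap B_1$, $w=|v|$ on $\Omega_L\cap\partial B_1$, applies \eqref{e:ine} to the pair $(w,u)$, and then simply cites \cite{ath-caff-1985} for the passage from the sup bound \eqref{e:ine2} to the H\"older bound, transferring back to $v/u$ via $|v|\le w$ and interior estimates. You instead split $v=h+p_1-p_2$ for the $L^\infty$ bound and then run the boundary oscillation decay explicitly, with a barrier $\Psi_r$ at every scale; this has the virtue of making visible exactly where $2-\alpha_1>0$ enters (the per-step error $\sup \Psi_r/u\lesssim \|f\|_\infty r^{2-\alpha_1}/u(e_n/2)$), which the paper leaves inside the cited ``standard argument,'' and the additive $\Psi_r$ inside $V_1,V_2$ also quietly supplies the normalization needed to apply the rescaled Theorem \ref{t:mainrhs} at scale $r$. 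A few points to tighten, all at the same level of informality as the paper's own sketch: $h$ need not be nonnegative, so either dominate $\pm h$ by the harmonic function with boundary data $|v|$ (the paper's $w$ does this in one stroke) or apply Theorem \ref{t:mainrhs} itself rather than the standard boundary Harnack principle, which also sidesteps the fact that $u$ is only a supersolution; for the same reason $\Delta(v-a_r u)$ picks up a term $-a_r\Delta u$, which is harmless because $|a_r|,|b_r|$ are already bounded by your Stage 1 estimate, so the barrier $\Psi_r$ need only be enlarged by that constant; and the lower bound at the interior reference point should read $u(y^*)\gtrsim u(e_n/2)\,r^{\alpha_1}$ (via Lemma \ref{l:contr} together with the Carleson-type bound of Lemma \ref{l:control}), not $u(re_n/2)\,r^{\alpha_1}$. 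With these routine adjustments your argument is a complete, self-contained proof of the corollary, essentially writing out the iteration the paper outsources to the literature.
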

 
 \begin{proof}
  The argument for how the boundary Harnack principle implies H\"older regularity is now standard (see \cite{ath-caff-1985}). We outline how to 
  adapt to the case when $w$ solves 
   \[
    \begin{aligned}
    \Delta w &= -1 & \ &\text{ in } \Omega_L, \\
    w&=0 & \ &\text{ on } \partial \Omega_L \cap B_1, \\
    w&=|v|  & \ &\text{ on }  \Omega_L \cap \partial B_1.
    \end{aligned}
   \]
   From \eqref{e:ine} it follows that 
    \begin{equation}  \label{e:ine2}
     \left\| \frac{w}{u} \right\|_{\Omega_{L,1/2}} \leq C \frac{w(e_n/2)}{u(e_n/2)}.  
    \end{equation}   
    It is now a standard argument (see \cite{ath-caff-1985}) that \eqref{e:ine2} implies that there exists $\beta$ depending on $M$ and $M-L$  such that 
    \[
     \left\| \frac{w}{u}\right\|_{C^{0,\beta}(\Omega_{L,1/2})} \leq C \frac{w(e_n/2)}{u(e_n/2)}. 
    \]
    Since $|v|\leq w$ we obtain a H\"older growth bound for $v/w$ at $\partial \Omega_{L,1/2}$. The interior H\"older estimates for both $v$ and $u$ combined with the fact 
    that $|v|/u$ is bounded give interior H\"older estimates for $v/u$. The interior H\"older estimates combined with the boundary H\"older estimates for $v/u$ are combined in 
    a standard way to conclude \eqref{e:hest}. 
  \end{proof}

%
 %

 \section{Second-Order Elliptic Operators}  \label{s:div}

 The techniques employed in Section \ref{s:lipschitz} can be applied to other elliptic operators, and in this section we extend the results of 
 Section \ref{s:lipschitz} to second order linear elliptic operators in divergence form on Lipschitz domains.  Specifically, we consider operators of the form
 \[
  \mathcal{L} u = (a^{ij}u_i)_j + b^i u_i + cu
 \]
 We assume the following ellipticity conditions  
  \[
    \Lambda^{-1} |\xi|^2 \leq \langle a^{ij}(x) \xi, \xi \rangle \leq \Lambda |\xi|^2, 
  \]
 for some $\Lambda >0$ and for all nonzero $\xi \in \R^n$. Furthermore, $a^{ij}(x)$ is a real $n \times n$ matrix. 
 For the lower order terms we assume $|c(x)|, \sum |b^i(x)| \leq \Lambda-1$ and that $c(x) \leq 0$. 
  
 We will continue with the notation of Section \ref{s:lipschitz}; however,
 we now write 
 $u \in \mathcal{S_{\mathcal{L}}}(\Omega_{L,R})$ if 
  \[
   \begin{aligned}
     \mathcal{L} u(x) &=0 \text{ in } \Omega_{L,R} \cap B_R ,\\
      u(x)&=0 \text{ on } \Omega_{L,R}^c\cap B_R,
   \end{aligned}
  \]
 and that
 $u \in \mathcal{S_{\mathcal{L}}}(\Omega_{L,R},d^{\gamma})$ if 
  \[
   \begin{aligned}
     |\mathcal{L} u(x)| &\leq (\text{dist}(x,\partial \Omega_{L,R} \cap  B_R))^{\gamma} \ \text{ in } \Omega_{L,R} , \\
      u(x)&=0 \ \text{ on } \Omega_{L,R}^c\cap B_R.
   \end{aligned}
  \]
 To apply the H\"older continuity estimates for elliptic operators we will require that   
  $$\gamma>-2/n ;$$ 
 see Theorem 8.27 in \cite{gt01}.
 Since the boundary is Lipschitz, this will ensure the correct integrability 
 assumptions for the right hand side. We will assume these bounds throughout the section whenever referencing $\mathcal{S_{\mathcal{L}}}(\Omega_{L,R},d^{\gamma})$.

 From the forthcoming Lemma \ref{l:genliousville},  it will follow that if $u \geq 0$ and $u \in \mathcal{S_{\mathcal{L}}}(\C_L,\infty)$, then $u$ is unique up to multiplicative constant and we again denote $u$ by 
  $u_L$; however, $u_L$ will not necessarily be homogeneous.  We recall that $\C_L$ is defined although not convex when $L<0$. To emphasize 
  when $-L<0$, we again write $\C_{-L}$ and $u_{-L}$ when 
  $u \in \mathcal{S_{\mathcal{L}}}(\C_{-L})$. We will follow the same outline as in Section \ref{s:lipschitz}. 
  
  In Section \ref{s:lipschitz} we utilized the standard boundary Harnack principle. Since the standard boundary Harnack principle is unavailable when considering the zero-order
  term $c(x)$, we prove the next two Lemmas under the situation $b^i, c \equiv 0$.  
  
  \begin{lemma}   \label{l:genliousville}
    Let $u,v \in \mathcal{S}_{\mathcal{L}(\Omega_{L,\infty})}$ with $u \geq 0$. Assume also that $b^i, c \equiv 0$. If there exists $C>0$ such that 
    \[
     \sup_{B_R} |v| \leq C \sup_{B_R} u \text{ for } R \geq 1, 
    \]
    then there exists $c \in \mathbb{R}$ such that $v \equiv cu$. 
  \end{lemma}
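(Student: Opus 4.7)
The plan is to follow the two-step strategy of Lemma~\ref{l:lio2}: first upgrade the one-sided growth bound to a pointwise comparison $|v| \le C_1 u$ on all of $\Omega_{L,\infty}$, and then invoke a nonnegative Liouville property to conclude that $v$ is a scalar multiple of $u$.

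For the first step, I would let $v_r$ be the $\mathcal{L}$-solution in $B_r \cap \Omega_{L,\infty}$ with boundary data $v^+$, so that $v^+ \le v_r$ by comparison. Combining the hypothesis $\sup_{B_r}|v| \le C \sup_{B_r} u$ with divergence-form analogues of Corollary~\ref{c:contr} and Lemma~\ref{l:control} (the latter giving an interior Harnack chain, the former giving a H\"older-type comparison with the cone barrier $u_{-M}$ for $M \ge L$), one obtains a bound $v_r((r/2) e_n) \le C' u((r/2)e_n)$ at a fixed interior reference point. The standard boundary Harnack principle for divergence-form operators \emph{without} lower-order terms (which is exactly where the hypothesis $b^i \equiv c \equiv 0$ is used) then promotes this interior comparison to $v^+ \le C_1 u$ on $B_{r/2} \cap \Omega_{L,\infty}$, with constants independent of $r$. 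Letting $r \to \infty$ and repeating the argument for $v^-$ gives $|v| \le C_1 u$ throughout $\Omega_{L,\infty}$.

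For the second step, I would first establish the divergence-form analogue of Lemma~\ref{l:liousville}: whenever $w_1, w_2 \in \mathcal{S}_{\mathcal{L}}(\Omega_{L,\infty})$ are nonnegative, $w_1 = c w_2$ for some $c \ge 0$. The proof copies the one for the Laplacian: set $w = w_1 + w_2$, take the largest $c$ with $c w_2 \le w$, pick a sequence $x_k$ with $c w_2(x_k)/w(x_k) \to 1$, and apply the standard boundary Harnack principle on balls $B_{r_k}$ with $r_k = \max\{2|x_k|, k\}$ to the nonnegative $\mathcal{L}$-harmonic pair $(w - c w_2, w)$ to force $w \equiv c w_2$. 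Applying this result to the nonnegative pair $(C_1 u - v, u)$ produced in the first step yields $C_1 u - v = c' u$, so $v = (C_1 - c') u$, as claimed.

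The main obstacle is verifying that each tool borrowed from Section~\ref{s:lipschitz} has a suitable divergence-form counterpart: the growth estimate of Corollary~\ref{c:contr}, the interior comparison of Lemma~\ref{l:control}, the standard boundary Harnack principle, and the uniqueness of nonnegative solutions up to scale. The hypothesis $b^i \equiv c \equiv 0$ enters precisely in the boundary Harnack step, since that principle is classical only in the absence of lower-order terms; elsewhere the general De~Giorgi--Nash--Moser theory of $\mathcal{L}$-harmonic functions suffices, with the only genuinely new input being qualitative information about the cone solutions $u_L$ (no longer explicitly homogeneous) used to construct the barriers.
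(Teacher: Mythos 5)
Your proposal is correct and follows essentially the same route as the paper, whose proof simply observes that with $b^i \equiv c \equiv 0$ the standard boundary Harnack principle for divergence-form operators (Caffarelli--Fabes--Mortola--Salsa) is available, so the arguments of Lemma \ref{l:liousville} and Lemma \ref{l:lio2} carry over verbatim. You have merely written out those two transplanted arguments in detail, correctly locating where the no-lower-order-terms hypothesis is used.
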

 
 \begin{proof}
  When $b^i, c \equiv 0$, there is a standard Boundary Harnack principle for divergence form equations \cite{cfms81};
   therefore, 
  the proof of Lemma \ref{l:liousville} holds in this situation, and so the proof of Lemma \ref{l:lio2} also holds as well. 
 \end{proof}
 
  \begin{theorem}   \label{t:logstop}  
  Assume $\mathcal{L}$ has no lower order terms; i.e,  $\mathcal{L}w = \partial_j(a^{ij} u_i)$. 
  Let $v,u \in \mathcal{S}_{\mathcal{L}}(\Omega_{L,\infty})$ with $u \geq 0$, and assume $b^i, c \equiv 0$. If there exists $C>0$ such that 
  \[
   |v(x)| \leq C\ln(|x|+1) u(x) \text{ for } |x| \geq 1,
  \]
  then $v(x)=cu(x)$ for some $c \in \mathbb{R}$. 
 \end{theorem}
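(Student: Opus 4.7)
The plan is to show directly that the ratio $V := v/u$ (well-defined in the interior of $\Omega_{L,\infty}$) is constant, via a scale-invariant oscillation-decay estimate derived from the standard boundary Harnack principle of \cite{cfms81}, which is available here precisely because $b^i\equiv 0$ and $c\equiv 0$. First observe that the hypothesis $|v(x)|\leq C\ln(|x|+1)u(x)$ implies, for every $R\geq 1$, the uniform bound
\[
   \|V\|_{L^{\infty}(B_R\cap\Omega_{L,\infty})} \;\leq\; M(R) := C\ln(R+1),
\]
and in particular the functions $v-m(R)u$ and $\overline M(R)u-v$ are both nonnegative $\mathcal{L}$-solutions in $B_R\cap\Omega_{L,\infty}$ that vanish on $\partial\Omega_{L,\infty}\cap B_R$, where $m(R):=\inf_{B_R\cap\Omega}V$ and $\overline M(R):=\sup_{B_R\cap\Omega}V$.

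Next, apply the standard BHP of \cite{cfms81} to the two pairs $(v-m(R)u,\,u)$ and $(\overline M(R)u-v,\,u)$ on $B_{R/2}$ and combine the resulting sup/inf comparisons by the classical algebraic manipulation; this produces a universal constant $\theta=\theta(n,\Lambda,L)\in(0,1)$ with
\[
   \omega(R/2) \;\leq\; \theta\,\omega(R), \qquad R\geq 1,
\]
where $\omega(R):=\overline M(R)-m(R)=\operatorname{osc}_{B_R\cap\Omega}V$. The essential point is that $\theta$ does not depend on $R$: dyadic rescaling preserves both the Lipschitz-$L$ class of the global domain and the ellipticity class of $\mathcal{L}$, so the BHP constants on $B_R\cap\Omega_{L,\infty}$ are identical at every scale.

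Iterating the oscillation decay, for any fixed $\rho\geq 1$ and $k\in\N$,
\[
   \omega(\rho) \;\leq\; \theta^k\,\omega(2^k\rho) \;\leq\; 2\theta^k M(2^k\rho) \;\leq\; C'\,\theta^k\bigl(k+\ln(\rho+1)\bigr),
\]
using the crude bound $\omega\leq 2M$. Sending $k\to\infty$ and using $k\theta^k\to 0$ gives $\omega(\rho)=0$ for every $\rho\geq 1$. Hence $V$ is constant on each $B_\rho\cap\Omega_{L,\infty}$, and therefore on all of $\Omega_{L,\infty}$, yielding $v=cu$ as required.

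The main obstacle is justifying that the improvement-of-oscillation constant $\theta$ is truly scale-invariant on the unbounded domain $\Omega_{L,\infty}$. This requires that the standard BHP of \cite{cfms81}, usually phrased for bounded Lipschitz domains, applies at every dyadic scale with constants depending only on $n$, $\Lambda$ and $L$—which is true because the statement is local (it only sees $\partial\Omega$ within $B_R$) and dilation preserves both the Lipschitz and ellipticity classes. A secondary subtlety is that the lower-order terms must genuinely be absent: with nontrivial $b^i$ or $c$ the standard BHP (and hence the oscillation reduction) can fail, which is precisely why Theorem \ref{t:logstop} requires $b^i,c\equiv 0$, and why the general case with zero-order terms must be handled separately.
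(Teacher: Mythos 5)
Your argument is correct, but it is a genuinely different route from the one in the paper. The paper argues by contradiction and compactness: assuming $v$ is not a multiple of $u$, it takes the concave envelope $g$ of $h(R)=\sup_{B_R}|v|/u$, uses that $g$ is slowly varying under the logarithmic growth hypothesis, blows down along radii where $h$ touches $g$, extracts limits $w_1,w_2$ for a limiting operator and limiting Lipschitz domain, applies the Liouville result (Lemma \ref{l:genliousville}) to the limits to get $w_2=cw_1$, and then derives a quantitative contradiction at a fixed point where the ratio must be small. You instead run a direct, non-compactness Liouville argument on the ratio $V=v/u$: the standard scale-invariant boundary Harnack principle of \cite{cfms81}, applied to the nonnegative solutions $v-m(R)u$ and $\overline M(R)u-v$, gives the classical oscillation decay $\omega(R/2)\leq\theta\,\omega(R)$ with $\theta<1$ uniform in $R$ (scale invariance holds exactly because dilation preserves the Lipschitz class of $\Omega_{L,\infty}$ and the ellipticity class when $b^i,c\equiv0$), and iterating outward beats the logarithmic growth, forcing $\omega\equiv0$. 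Your approach is more elementary and quantitative: it avoids blow-down limits, limiting operators and domains, and Lemma \ref{l:sumdiv}-type bookkeeping, and it in fact yields the stronger statement in the paper's remark (indeed any growth $|v|\leq C|x|^{\delta}u$ with $2^{\delta}\theta<1$ suffices), whereas the paper's envelope/compactness scheme is the one that meshes with the rest of the paper's machinery and generalizes more readily when only sup-over-ball comparisons, rather than pointwise ratio bounds, are available. One small point you should patch: the hypothesis $|v(x)|\leq C\ln(|x|+1)u(x)$ is only assumed for $|x|\geq1$, so the finiteness of $m(R)=\inf_{B_R\cap\Omega}V$ and $\overline M(R)=\sup_{B_R\cap\Omega}V$ (needed for the nonnegativity of $v-m(R)u$ and $\overline M(R)u-v$) is not immediate on $B_1\cap\Omega$, where $0\in\partial\Omega$; it follows by the harmonic-replacement trick of Lemma \ref{l:lio2} (solve for $v^{\pm}$ data in $B_2\cap\Omega$ and apply the standard boundary Harnack principle), after which your bound becomes $\|V\|_{L^\infty(B_R\cap\Omega)}\leq C(1+\ln(R+1))$ and the iteration goes through unchanged.
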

 
 \begin{remark}
  The  proof given below for Theorem \ref{t:logstop} depends on the function $g$ being \textit{slowly varying} at $\infty$. Thus, the same proof 
  will actually show a stronger result: If for every $\epsilon>0$ there exists $C_{\epsilon}$ such that 
  \[
   |v(x)|\leq C_{\epsilon} |x|^{\epsilon}u(x) \text{ for } |x| \geq 1, 
  \]
  then $v=cu$ for some $c \in \mathbb{R}$. 
 \end{remark}
 
 \begin{proof}
  Suppose by way of contradiction that $v$ is not a constant multiple of $u$. Then by Lemma \ref{l:genliousville} we have 
  \[
  \limsup_{R \to \infty} \left( \sup_{B_R} \frac{|v(x)|}{u(x)}  \right)= \infty. 
  \]
  If $h(R)=\sup_{B_R} |v|/u$, let $g$ be the concave envelope of $h$. By assumption $g(R) \leq C \ln(R+1)$ for $R \geq 1$.
  The function $g$ satisfies, 
  \begin{equation}  \label{e:slowlyvarying}
   \lim_{R \to \infty} \frac{g(CR)}{g(R)}=1 \text{ for any } C>0. 
  \end{equation}
  There also exist infinitely many $R_k$ such that $h(R_k) = g(R_k)$. We define
  \[
   u_k(x):=\frac{u(R_k x)}{\sup_{B_{R_k}} u}  \ \text{ and } \ v_k(x)=\frac{v(R_k x)}{\sup_{B_{R_k}} |v|}.
  \]
  From \eqref{e:slowlyvarying} we have that for any fixed $\rho>1$, that 
  \begin{equation}  \label{e:slowlyvarying2}
    \lim_{R_k \to \infty} \frac{\sup_{B_{\rho}} |v|}{ \sup_{B_{\rho}} u} \leq 1. 
  \end{equation}
  By uniform continuity estimates up to the boundary for divergence form equations, by picking a subsequence 
  we have that 
  \[
   \lim_{R_k \to \infty} u_k(x) = w_1, 
  \]
  and that $u_k$ converges uniformly to $w_1$ on compact sets. From \eqref{e:slowlyvarying2} we also have that 
  \[
   \lim_{R_k \to \infty} v_k(x)= w_2, 
  \]
  with uniform convergence on compact sets and with $|w_2| \leq C w_1$. There also exists a limiting operator $\mathcal{L}_0$ and limiting Lipschitz domain $\tilde{\Omega}$
  such that 
  $w_1, w_2 \in \mathcal{S}_{\mathcal{L}_0}(\tilde{\Omega}_{L, \infty})$. Then from Lemma \ref{l:genliousville} we conclude that $w_2 = c w_1$ for some $c \in \mathbb{R}$. 
  Without loss of generality we assume $c=1$. 
  
  Then for any $\epsilon>0$, there exists $N_{\epsilon} \in \mathbb{N}$ such that if $k \geq N_{\epsilon}$, then
  \[
   \sup_{B_2} |v_k - u_k| < \epsilon. 
  \]
  By the standard boundary Harnack principle 
  \[
   \sup_{B_1} \frac{|v_k - u_k|}{u_k} \leq C \frac{\| v_k - u_k \|_{L^{\infty}(B_2)}}{u_k(e_n/2)} \leq C \epsilon. 
  \] 
 We fix $x^1 \in \Omega_{L,\infty}$. We also have
 \[
  \lim_{R_k \to \infty} \frac{|v_k(R_k^{-1}x^1)|}{|u_k(R_k^{-1} x^1)|} \leq \frac{v(x^1)}{u(x^1)}\frac{1}{g(R_k)} \to 0. 
 \]
 Then for large enough $k$ we have  $|v_k(R_k^{-1} x^1)|\leq u_k(R_k^{-1} x^1)/2$. Finally, we conclude then that  
 
 \[
  \frac{u(x^1)/2}{u(x^1)} \leq \sup_{B_1} \frac{|v_k - u_k|}{u_k}  \leq C \epsilon.
 \]
 The $C$ in the above estimate only depends on the ellipticity constants of $a^{ij}$ and the Lipschitz constant for the 
 domain. Consequently, we may choose $\epsilon$ small enough so that $2C \epsilon<1$, which implies $u(x^1)=0$
 which is a contradiction since $u>0$ in $\Omega_{L,\infty}$.  
 \end{proof}

%
%

 For the remainder of the section we no longer assume that the lower order terms are zero. 
 \begin{lemma}   \label{l:3control}
  Let $L \leq M$ and $u \in \mathcal{S_{\mathcal{L}}}(\Omega_L, d^{\gamma})$ with $u \geq 0$ and $\mathcal{L} u \leq 0$.  Assume also $0 \in \partial \Omega_L$.   
  Let $x \in \partial \Omega_L \cap B_{1/2}$ and let $y \in \Omega_L \cap B_r(x)$ with $r \leq 1/4$.  
  Then there exists a constant $C$ depending only on dimension $n$, $M$, and $\text{dist}(y, \partial \Omega_L)$ such that 
   \[
    \begin{aligned}
    &\sup_{B_r(x)} u \leq C u(y) & \ & \text{ for all } r\leq 1/4,  \\
    &\sup_{B_{1/4}} u \geq C^{-1} u(y). 
    \end{aligned}
   \]
 \end{lemma}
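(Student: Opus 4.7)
I plan to prove Lemma \ref{l:3control} by combining a classical Carleson-type boundary estimate for nonnegative $\mathcal{L}$-solutions on Lipschitz domains with a quantitative interior Harnack chain, after splitting off the right-hand side via a maximum-principle-preserving decomposition. The two inequalities are essentially a boundary-to-interior and an interior-to-boundary comparison, both reduced to classical facts once the right-hand side has been isolated.

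Inside $\Omega_L \cap B_{3/4}$ I would write $u = u_1 + u_2$, where $u_1 \in W^{1,2}_0(\Omega_L \cap B_{3/4})$ solves $\mathcal{L}u_1 = \mathcal{L}u$, and $u_2 := u - u_1$ is the $\mathcal{L}$-harmonic replacement of $u$ with the same Dirichlet data. Since $\mathcal{L}u \leq 0$ and $c \leq 0$, the weak maximum principle yields $u_1 \geq 0$, and since $u \geq 0$ on $\partial(\Omega_L \cap B_{3/4})$ the maximum principle for the $\mathcal{L}$-solution $u_2$ gives $u_2 \geq 0$. For $u_2$ I would invoke the standard Carleson estimate for nonnegative $\mathcal{L}$-solutions vanishing on a portion of a Lipschitz boundary (Caffarelli-Fabes-Mortola-Salsa, with the extension to lower-order terms under $c \leq 0$ proceeding via the same corkscrew and harmonic-measure arguments), yielding
\[
 \sup_{\Omega_L \cap B_r(x)} u_2 \leq C_1\, u_2(A_{x,r}),
\]
where $A_{x,r}$ is a corkscrew point for $B_r(x) \cap \Omega_L$. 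For $u_1$, the hypothesis $\gamma > -2/n$ ensures that $d^\gamma \in L^p$ for some $p > n/2$ on any Lipschitz subdomain, and a Green's-function estimate on $\Omega_L \cap B_r(x)$ produces $\|u_1\|_{L^\infty(B_r(x))} \leq C_0\, r^{2+\gamma}$.

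To conclude the first inequality I would chain the interior Harnack inequality (Moser, applicable to the supersolution $u$ on balls compactly contained in $\Omega_L$ where $d^\gamma$ is bounded) along a finite sequence of balls from $A_{x,r}$ to $y$; since $u_2 \leq u$ pointwise, this yields $u_2(A_{x,r}) \leq C_2(\mathrm{dist}(y,\partial\Omega_L))\, u(y)$, with the constant $C_2$ depending only on $M$ and the length of the chain. The additive contribution $C_0 r^{2+\gamma}$ is absorbed by the weak Harnack inequality applied to $u - u_1 \geq 0$, which forces $u(y) \geq c\, r^{2+\gamma}$ whenever $u \not\equiv 0$ (and the estimate is trivial otherwise). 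The second inequality is proved dually: running the Harnack chain from $y$ inward to a fixed interior reference point of $B_{1/4}$ (such as $e_n/4$, which lies at uniformly positive distance from $\partial \Omega_L$ because of the Lipschitz constraint $L \leq M$) produces $u(y) \leq C\, u(e_n/4) \leq C \sup_{B_{1/4}} u$.

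The principal difficulty is keeping the additive error from the right-hand side compatible with the multiplicative Carleson conclusion; this is precisely where the weak Harnack inequality is used to produce a quantitative lower bound on $u(y)$, and where the condition $\gamma > -2/n$ is essential to fit the right-hand side into the $L^p$-framework needed for the $L^\infty$ bound on $u_1$. A secondary technical point is verifying that the standard Carleson machinery carries through in the presence of the zero-order coefficient $c \leq 0$; this is routine but must be checked, since the most frequently cited references restrict to $c \equiv 0$.
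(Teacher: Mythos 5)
The central step of your argument fails. The absorption claim --- that the weak Harnack inequality applied to $u-u_1\geq 0$ ``forces $u(y)\geq c\,r^{2+\gamma}$ whenever $u\not\equiv 0$'' --- is false: every hypothesis of the lemma ($u\geq 0$, $\mathcal{L}u\leq 0$, $|\mathcal{L}u|\leq d^{\gamma}$, vanishing boundary data) is preserved under $u\mapsto \epsilon u$ for $\epsilon\in(0,1]$, which makes $u(y)$ arbitrarily small while your claimed lower bound $c\,r^{2+\gamma}$ stays fixed; moreover the harmonic replacement $u_2=u-u_1$ may vanish identically, so no nondegeneracy can be extracted from it. Without that step your decomposition only yields the additive estimate $\sup_{B_{1/4}(x)}u\leq C\bigl(u(y)+C_0\bigr)$. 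And no cleverer absorption can rescue the purely multiplicative form uniformly over the class: take $\mathcal{L}=\Delta$, $\gamma=0$, and let $u$ be the Green potential in $\Omega_L$ of $-\chi_{B_s(x)\cap\Omega_L}$ with zero boundary data; then $\sup_{B_{1/4}(x)}u\gtrsim s^{2}$ (value at a point at distance $\sim s$ from the boundary inside the support) while $u(y)\approx C(y)\,s^{\,n+1}$, so the ratio blows up as $s\to 0$. Concentrating the right-hand side in a small ball near $y$ similarly defeats the second inequality. So the multiplicative statement can only be reached with an additive $\|d^{\gamma}\|$-type error (or under an extra normalization of $u$), and your proposal does not supply the missing ingredient.

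For comparison, the paper does not attempt your route at all: its proof is a direct appeal to the interior weak Harnack inequality for the nonnegative supersolution $u$ (using $\mathcal{L}u\leq 0$) together with uniform H\"older continuity up to the boundary from \cite{gt01}; there is no harmonic replacement and no Carleson estimate of Caffarelli--Fabes--Mortola--Salsa, and the right-hand side only enters through fixed additive terms, which is all that the later rescaling/compactness arguments (Lemma \ref{l:3contr}, Theorem \ref{t:genrhs}) actually require. Two secondary points in your write-up would also need repair even for the additive version: the bound $\|u_1\|_{L^{\infty}(B_r(x))}\leq C_0\,r^{2+\gamma}$ does not follow from the decomposition performed on all of $\Omega_L\cap B_{3/4}$ (you control $u_1$ globally by a constant; a localized bound near $x$ requires a separate boundary-decay argument), and chaining from the corkscrew point $A_{x,r}$ to $y$ costs a constant that grows like a power of $1/r$ --- you should instead apply the Carleson estimate at unit scale on $B_{1/4}(x)$ and use that $\sup_{B_r(x)}u$ is monotone in $r$. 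Your concern about extending the Carleson machinery to $c\leq 0$ is legitimate but is not the main obstruction here.
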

 
 \begin{proof}
   Since $u \geq 0$ and $\mathcal{L} u \leq 0$ and $u \in \mathcal{S_{\mathcal{L}}}(\Omega_{L,R},d^{\gamma})$, this is an application of 
   the interior weak Harnack inequality as well as uniform H\"older continuity up to the boundary, see \cite{gt01}. 
  
 \end{proof}

We now state the analogue of Lemma \ref{l:conv}. 
   \begin{lemma}  \label{l:3conv}
  Let $\Omega_{L_k,R_k}$ be a sequence of domains with $L_k \leq M$, $R_k \geq 1$, and $0 \in \partial \Omega_{L_k}$. Let 
  $u_k \in \mathcal{S_{\mathcal{L}}}(\Omega_{L_k,R_k}, d^{\gamma})$, and assume $\gamma >-2/n$.   Further assume either    
  \[
    \begin{aligned}
      &(1) \qquad u_k \geq 0 \text{ and } \sup_{B_{1/2}} u_k \leq 1,  \text{ or } \\ 
        &(2)\qquad \sup_{B_r} |u_k| \leq C r^{\beta} \text{ for } r\leq 1 \text{ and some constants } C, \beta >0.  
     \end{aligned}
   \]
  Then there exists a subsequence with a limiting domain $\Omega_{L_0,R_0}$ and a limiting function 
  $u_0 \in \mathcal{S}(\Omega_{L_0,R_0},d^{\gamma})$ such that  
  \[
   \sup_{B_r} |u_k - u_0| \to 0 ,
  \]
  for all $r<R_0$. 
 \end{lemma}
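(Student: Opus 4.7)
The plan is to follow the standard three-step compactness argument adapted to Lipschitz domains with a controlled right-hand side: (a) extract a limiting Lipschitz domain, (b) derive uniform $L^{\infty}$ and $C^{0,\alpha}$ bounds for $u_k$ up to the boundary, and (c) apply Arzel\`a--Ascoli and pass to the limit in the weak formulation. The hypothesis $\gamma>-2/n$ enters precisely to guarantee that $d^{\gamma}$ lies in $L^{p}_{\mathrm{loc}}$ for some $p>n/2$, which is the range where De Giorgi--Nash--Moser theory and the boundary H\"older estimate of Gilbarg--Trudinger (Theorem~8.27 in \cite{gt01}) apply with constants depending only on the ellipticity, the Lipschitz constant, and $\gamma$.

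\textit{Step 1 (Domain convergence).} The Lipschitz graphs $g_k$ defining $\partial\Omega_{L_k,R_k}$ satisfy $g_k(0)=0$ and $\mathrm{Lip}(g_k)\le M$, so Arzel\`a--Ascoli yields a subsequence converging locally uniformly to a Lipschitz $g_0$ with $\mathrm{Lip}(g_0)\le M$. Passing $R_k\to R_0\in[1,\infty]$ gives $\Omega_{L_0,R_0}=\{x_n>g_0(x')\}\cap B_{R_0}$, and for every $\rho<R_0$ the Hausdorff distance between $\partial\Omega_{L_k,R_k}\cap B_\rho$ and $\partial\Omega_{L_0,R_0}\cap B_\rho$ tends to zero. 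In particular the distance functions $d_k(\cdot):=\mathrm{dist}(\cdot,\partial\Omega_{L_k,R_k})$ converge locally uniformly to $d_0$.

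\textit{Step 2 (Uniform estimates).} In case $(2)$ the bound $\sup_{B_r}|u_k|\le Cr^{\beta}$ is given. In case $(1)$, propagate the bound $\sup_{B_{1/2}}u_k\le 1$ to any compact $K\subset\overline{\Omega_{L_0,R_0}}\cap B_\rho$ by iterating the interior weak Harnack inequality and the boundary comparison principle in Lemma~\ref{l:3control} along Harnack chains, using a rescaled homogeneous supersolution on a slightly larger cone (as in Lemma~\ref{l:seq}) as barrier to absorb the right-hand side $d_k^{\gamma}$. Since $\gamma>-2/n$, the functions $f_k:=\mathcal{L}u_k$ form a bounded set in $L^{p}_{\mathrm{loc}}$ for some $p>n/2$, uniformly in $k$. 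The interior De Giorgi--Nash--Moser estimates and Theorem~8.27 of \cite{gt01} then give uniform $C^{0,\alpha}$ bounds up to $\partial\Omega_{L_k,R_k}\cap B_\rho$, with $\alpha$ and the constant depending only on $n$, $\Lambda$, $M$, $p$, $\rho$, $\gamma$.

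\textit{Step 3 (Extraction and passage to the limit).} By Arzel\`a--Ascoli, a further subsequence converges uniformly on each compact subset of $\overline{\Omega_{L_0,R_0}}\cap B_\rho$ to a continuous $u_0$; the uniform boundary H\"older bound combined with $u_k\equiv 0$ on $\Omega_{L_k,R_k}^c$ and the Hausdorff convergence of boundaries forces $u_0=0$ on $\Omega_{L_0,R_0}^c\cap B_{R_0}$. A Caccioppoli inequality applied to $u_k$ gives a uniform bound in $H^{1}_{\mathrm{loc}}(\Omega_{L_0,R_0})$, so along a subsequence $u_k\rightharpoonup u_0$ weakly in $H^1_{\mathrm{loc}}$. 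The right-hand sides $f_k$, being bounded in $L^{p}_{\mathrm{loc}}$, admit a weak limit $f_0$, and since $d_k\to d_0$ locally uniformly the pointwise bound $|f_0|\le d_0^{\gamma}$ passes to the limit. Testing the weak formulation of $\mathcal{L}u_k=f_k$ against $\varphi\in C^{\infty}_c(\Omega_{L_0,R_0})$ and using weak $H^1$ convergence of $u_k$ (together with weak-$*$ convergence of the coefficients, if one lets $\mathcal{L}$ vary with $k$) yields $\mathcal{L}u_0=f_0$ weakly, hence $u_0\in\mathcal{S}_\mathcal{L}(\Omega_{L_0,R_0},d^{\gamma})$.

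\textit{Main obstacle.} The critical point is the uniform $C^{0,\alpha}$ estimate up to the moving Lipschitz boundary. This is exactly what forces the hypothesis $\gamma>-2/n$: it is equivalent to $d^{\gamma}\in L^{p}_{\mathrm{loc}}$ for some $p>n/2$, the threshold required by Theorem~8.27 in \cite{gt01}. A secondary technical point is that when the coefficients of $\mathcal{L}$ are allowed to vary with $k$, the passage to the limit in the PDE cannot be done by mere uniform convergence; one must combine the weak $H^1$ compactness of $u_k$ with weak-$*$ compactness of $a^{ij}_k$, $b^i_k$, $c_k$ and test against smooth compactly supported functions to conclude.
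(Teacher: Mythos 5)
Your argument is correct and takes essentially the same route as the paper: the paper's entire proof of Lemma \ref{l:3conv} is a one-line appeal to uniform H\"older continuity up to the boundary (Theorem 8.27 in \cite{gt01}), with $\gamma>-2/n$ supplying exactly the $L^{p}$, $p>n/2$, integrability of the right-hand side that you identify. Your Steps 1--3 merely spell out the standard details the paper leaves implicit (convergence of the Lipschitz graphs, equicontinuity, Arzel\`a--Ascoli, and passage to the limit in the weak formulation).
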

 
 \begin{proof}
  This is an application of uniform H\"older continuity up to the boundary, see \cite{gt01}. 
  \end{proof}

 \begin{lemma}  \label{l:3contr}
  Let $u\in \mathcal{S}_{\mathcal{L}}(\Omega_L)$ with $u\geq 0$ and $\mathcal{L} u \leq 0$.  Let $L<M$.  
     There exist constants $c_1,c_2, \beta,\alpha$ depending only on $n,M,\Lambda$ and $M-L$ such that for any 
   $x \in \partial \Omega_L \cap B_{1/2}$
   \[
    \begin{aligned}
     &(1) \qquad \sup_{B_{r}(x)} u \geq c_1 u(e_n/2) r^{\alpha} , \\ 
     &(2) \qquad \sup_{B_{r}(x)} u \leq c_2 u(e_n/2) r^{\beta}, 
    \end{aligned}
   \]
   for any $r \leq 1/4$. 
 \end{lemma}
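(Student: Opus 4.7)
My plan is to mimic the cone-barrier argument of Lemma~\ref{l:contr}, replacing the explicit homogeneity of $u_M,u_{-M}$ by two classical tools for divergence-form operators with bounded measurable coefficients and $c\le 0$: Moser's interior Harnack inequality (for the lower bound (1)) and boundary H\"older continuity on Lipschitz domains (for the upper bound (2)); both hold with constants depending only on $n$, $\Lambda$, and the Lipschitz constant of the underlying domain. The geometric input is that $L<M$ yields a two-sided cone condition at every $x\in\partial\Omega_L\cap B_{1/2}$: by the Lipschitz graph structure, $x+\mathcal{C}_M\subset\Omega_L$ on the scale $B_{1/2}(x)$, and $\Omega_L\cap B_{1/2}(x)\subset x+\mathcal{C}_{-M}$. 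In particular, for each $r\in(0,1/4]$ I can pick an interior point $z_r\in B_r(x)\cap\Omega_L$ with $\operatorname{dist}(z_r,\partial\Omega_L)\ge c_0 r$ for some $c_0=c_0(M-L)>0$.

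For (1), I connect $e_n/2$ to $z_r$ by a Harnack chain consisting of $O(\log(1/r))$ balls $B_{\rho_k}(y_k)\subset\Omega_L$, with radii $\rho_k$ proportional to $\operatorname{dist}(y_k,\partial\Omega_L)$ and shrinking geometrically from $\sim 1$ at $e_n/2$ down to $\sim c_0 r$ at $z_r$. Moser's Harnack inequality for our operator (see Gilbarg--Trudinger, Chapter~8) applied ball by ball gives
\[
u(z_r)\ \ge\ H^{-O(\log(1/r))}\,u(e_n/2)\ =\ c_1 r^{\alpha}\,u(e_n/2),
\]
where $H=H(n,\Lambda)$ is the Harnack constant and $\alpha=\alpha(n,\Lambda,M,M-L)>0$. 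Since $z_r\in B_r(x)$, taking suprema over $B_r(x)$ proves~(1).

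For (2), set $M_0:=\sup_{B_{1/4}(x)\cap\Omega_L} u$; by Lemma~\ref{l:3control}, $M_0\le C u(e_n/2)$. Let $v$ solve $\mathcal{L}v=0$ in $(x+\mathcal{C}_{-M})\cap B_{1/4}(x)$ with $v=0$ on the cone boundary and $v=M_0$ on the spherical part. Since $\Omega_L\cap B_{1/4}(x)\subset(x+\mathcal{C}_{-M})\cap B_{1/4}(x)$ and $u\le v$ on $\partial(\Omega_L\cap B_{1/4}(x))$ (using $u=0\le v$ on $\partial\Omega_L$ and $u\le M_0=v$ on $\partial B_{1/4}(x)\cap\Omega_L$), the weak maximum principle for divergence-form operators with $c\le 0$ (Gilbarg--Trudinger Theorem~8.1) applied to $u-v$ yields $u\le v$ in $\Omega_L\cap B_{1/4}(x)$. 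Boundary H\"older continuity for $\mathcal{L}$-solutions on the Lipschitz domain $x+\mathcal{C}_{-M}$ (Gilbarg--Trudinger Theorem~8.29), applied at the vertex $x$ where $v(x)=0$, then gives $v(y)\le C|y-x|^{\beta} M_0$ for some $\beta=\beta(n,\Lambda,M)>0$; combining this with the bound on $M_0$ yields~(2).

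The main technical point to watch is that the Harnack constant $H$ in (1) and the H\"older exponent $\beta$ in (2) depend only on $n,\Lambda,M,M-L$; this is standard for divergence-form operators with bounded lower-order terms and $c\le 0$, but requires careful tracking of the cited estimates' dependence on the $L^{\infty}$-norms of $b^i,c$ (which are controlled by $\Lambda-1$) and on the Lipschitz constant of the cone $\mathcal{C}_{-M}$ (which equals $M$). The interior cone radius $c_0$ and hence the exponent $\alpha$ in (1) depend on $M-L$ through the geometric construction.
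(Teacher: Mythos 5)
Your proof is correct for the lemma as literally stated, and it takes a genuinely different route from the paper's. For part (1) the paper argues by compactness and contradiction: if the estimate fails along a sequence, the rescalings $u_k(r_k x)/\sup_{B_{r_k}}u_k$ converge (Lemma \ref{l:3conv}) to a nonnegative, nontrivial limit vanishing at an interior point, contradicting the strong maximum principle. Your Harnack-chain argument is direct and quantitative, producing $\alpha \sim C\log H$ with no blow-up; since neither argument yields a sharp exponent, nothing is lost, and your route is arguably more self-contained. For part (2) the paper simply applies uniform boundary H\"older continuity to $u$ itself on the Lipschitz domain (Theorem 8.27/8.29 in \cite{gt01}) together with Lemma \ref{l:3control}; your detour through the auxiliary function $v$ on $x+\mathcal{C}_{-M}$ gives the same conclusion but is unnecessary, since $\Omega_L$ already satisfies an exterior cone condition at every boundary point.

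One caveat, which matters for how the lemma is actually used: in the proof of Theorem \ref{t:genrhs}, and in the paper's own contradiction sequence (where $u_k\in\mathcal{S}_{\mathcal{L}_k}(\Omega_{L_k},d^{\gamma})$), the lemma is invoked for supersolutions with a right-hand side, i.e. $|\mathcal{L}u|\le d^{\gamma}$ and $\mathcal{L}u\le 0$ --- this is the only reason the hypothesis $\mathcal{L}u\le 0$ appears at all. In that setting your comparison step in (2) breaks: with $\mathcal{L}v=0$ and $\mathcal{L}u\le 0$, the difference $u-v$ is a supersolution, and the weak maximum principle (Theorem 8.1 in \cite{gt01}) controls its minimum, not its maximum, by the boundary data; for instance $\Delta u=-1$ and $v$ harmonic with the same boundary values gives $u\ge v$, not $u\le v$. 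The fix is either to build the barrier with $\mathcal{L}v\le -d^{\gamma}$, or, more simply, to drop the barrier and apply the boundary H\"older estimate directly to $u$ (it tolerates a right-hand side in $L^{q}$ with $q>n/2$, which is exactly what $\gamma>-2/n$ on a Lipschitz domain guarantees) --- this is the paper's one-line proof of (2). Your part (1) survives in the supersolution setting provided you replace Moser's full Harnack inequality by the weak Harnack inequality for nonnegative supersolutions along the chain, which propagates the lower bound equally well.
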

 
 \begin{proof}
  Property $(2)$ is just uniform H\"older continuity up to the boundary. 
  Assume by way of contradiction that $(1)$ is not true. Then there exists a sequence satisfying 
  \[
   \begin{aligned}
    (1)&\qquad  u_k \in \mathcal{S}_{\mathcal{L}_k}(\Omega_{L_k}, d^{\gamma}) \\
    (2)&\qquad u_k \geq 0 \\
    (3)&\qquad \mathcal{L}_k u_k \leq 0 \\
    (4)&\qquad u_k(e_n/4) \leq \frac{u_k(e_n/2)}{k} \quad \text{ from Lemma } \ref{l:3control}. 
   \end{aligned}
  \]
  We rescale and let 
  \[
   w_k = \frac{u_k(r_k x)}{ \sup_{B_{r_k}} u_k}. 
  \]
  From Lemma \ref{l:3conv} we have that $w_k \to w_0$ uniformly and there is a limiting Lipschitz domain $\Omega_{L_0}$ 
  and limiting elliptic operator $\mathcal{L}_0$ such that 
  $u \in \mathcal{S}_{\mathcal{L}_0}(\Omega_{L_0})$. Now $w_0 \geq 0$, and from the definition of $w_k$ we conclude that $w_0$ is not identically zero.  
  However, $w_0(e_n/4)=0 $ which contradicts the strong maximum principle. 
 \end{proof}


  With the previous result,  the proof of Theorem \ref{t:genrhs} proceeds exactly as in the case of  Theorem \ref{t:mainrhs}.

    \begin{theorem}   \label{t:genrhs}
      Let $0 \in \partial \Omega_L$ with $L<M$, and fix $x^0 \in \Omega_L$. Assume further that 
      $B_1 \cap \{x_n > 1/4\} \subseteq \Omega_L$. Let $\gamma>-2/n$ and let $\alpha$ be as in Lemma \ref{l:3contr}.    
         Assume $u,v \geq 0$ and $u,v \in \mathcal{S}(\Omega_{L},d^{\gamma})$ with 
      $\mathcal L u, \mathcal L v \leq 0$ and $u(x^0)=v(x^0)=1$, and also  assume that $2-\alpha+\gamma>0$ with $\alpha$ as given in Lemma \ref{l:3contr}. 
      Then there exists a uniform constant $C>0$ 
      (depending only on dimension $n$, 
      Lipschitz constant $M$, $M-L$, and dist$(x^0,\partial \Omega_L)$) 
      such that 
       \begin{equation}  \label{e:ine3}
        C^{-1} v(x) \leq u(x) \leq Cv(x)
       \end{equation}
       for all $x \in B_{1/2}$. 
    \end{theorem}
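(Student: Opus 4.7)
The plan is to mirror the proof of Theorem \ref{t:mainrhs}, making use of the analogous Lemmas \ref{l:3control}, \ref{l:3conv}, \ref{l:3contr} in place of their Section \ref{s:lipschitz} counterparts, and identifying the correct limiting equation so that Theorem \ref{t:logstop} can still be invoked.

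First, by symmetry it suffices to establish the upper bound $v \leq C u$ on $B_{1/2}$, where we may even reduce to $\mathcal{L} u = 0$ after replacing $u$ by its $\mathcal{L}$-solution inside $\Omega_{L,2}$ with the same boundary data (this dominates the original $u$ since $\mathcal{L} u \leq 0$). Assume toward a contradiction that no such constant $C$ works. Then there exist sequences of domains $\Omega_{L_k}$ with $L_k \leq M$, operators $\mathcal{L}_k$ satisfying the uniform ellipticity and lower-order-term bounds, functions $u_k \in \mathcal{S}_{\mathcal{L}_k}(\Omega_{L_k,2})$ and $v_k \in \mathcal{S}_{\mathcal{L}_k}(\Omega_{L_k,2}, d^\gamma)$ with $\mathcal{L}_k v_k \leq 0$ and $v_k(x^0)=u_k(x^0)=1$, and points $x_k \in B_{1/2}$ with $k u_k(x_k) < v_k(x_k)$. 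An interior weak Harnack chain (Lemma \ref{l:3control}) forces $\operatorname{dist}(x_k, \partial \Omega_{L_k}) \to 0$, and after rescaling by a constant we may normalize $u_k(x_k', e_n) = v_k(x_k', e_n) = 1$, setting $y_k := (x_k', e_n)$.

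Next I would set up exactly the same two-parameter rescalings as in Theorem \ref{t:mainrhs}, namely
\[
 w_{r,k}(x) := \frac{v_k(rx) - \tfrac{v_k(r y_k)}{u_k(r y_k)} u_k(rx)}{\sup_{B_1 \cap \Omega} \bigl| v_k(rx) - \tfrac{v_k(r y_k)}{u_k(r y_k)} u_k(rx) \bigr|},
\]
and the discrete oscillations $a_{k,m}$, with $b_m := \sup_k a_{k,m}$. The failure of the desired bound forces $\sum b_m = +\infty$; Lemma \ref{l:sumdiv} then yields a subsequence $m_l$ along which one can choose $k=k(m_l)$ so that $\sum_{i=1}^j a_{k,m_l - i} / a_{k,m_l} \leq Cj$ for $j \leq N$. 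The iterated telescoping estimate of the proof of Theorem \ref{t:mainrhs} applies verbatim once one replaces the homogeneity-of-$u$ ratio by the bound $u_k(2^j r_k y_k)/u_k(2^{j-i-1} r_k y_k) \leq C$ coming from Lemma \ref{l:3contr} (both sides are controlled by $r^\alpha$ and $r^\beta$), and this yields the crucial growth estimate
\[
 \sup_{B_{2^j}} |w_{r_k,k}(x)| \leq C j \, \frac{\sup_{B_{2^j}} u_k(r_k x)}{\sup_{B_1} u_k(r_k x)}.
\]
The hypothesis $2-\alpha+\gamma>0$ enters through the estimate $|\mathcal{L}_k w_{r_k,k}| \to 0$ as $r_k\to 0$, exactly as in the Laplacian case, since the right-hand side for $v_k$ carries the factor $r_k^{2+\gamma}$ while the denominator of $w_{r_k,k}$ is of order $r_k^\alpha \cdot a_{k,m_l}$ with $a_{k,m_l}$ at least logarithmically large (again from Lemma \ref{l:sumdiv}).

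I would then apply Lemma \ref{l:3conv} twice: first to extract a subsequential uniform limit $u_\infty$ of $u_k(r_k x)/\sup_{B_{r_k}} u_k$ on a limiting Lipschitz global domain $\tilde\Omega$, and second to extract a limit $w_\infty$ of $w_{r_k,k}$ on $\tilde\Omega$. Here is the key structural point: under the rescaling $x \mapsto r_k x$, the second-order coefficients $a^{ij}(r_k x)$ converge (along a subsequence, in $L^\infty$-weak-$\ast$) to some $a_0^{ij}$, while the first-order and zero-order terms pick up factors $r_k$ and $r_k^2$ respectively and therefore drop out in the limit. Thus $u_\infty$ and $w_\infty$ both solve a pure divergence-form equation $(a_0^{ij} w_i)_j = 0$ on $\tilde\Omega$, with no lower-order terms. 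The properties of $w_\infty$ mirror those in the earlier proof: $\sup_{B_1}|w_\infty|=1$, $w_\infty(y_0)=0$, and $|w_\infty(x)| \leq C u_\infty(x) \ln(|x|+1)$ for $|x|\geq 1$. Since the limiting operator has no lower-order terms, Theorem \ref{t:logstop} forces $w_\infty = c u_\infty$, which contradicts the simultaneous conditions $\sup_{B_1} |w_\infty|=1$ and $w_\infty(y_0)=0$ because $u_\infty(y_0)>0$ by the interior Harnack inequality.

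The main obstacle is precisely this last structural observation: the standard boundary Harnack principle (and its logarithmic refinement in Theorem \ref{t:logstop}) is only known for the operator without zero-order terms, so the whole argument would collapse if the rescaling did not kill those terms. Verifying cleanly that $a^{ij}(r_k x)$ admits a subsequential limit whose solutions still satisfy Lemma \ref{l:3contr}'s growth control with uniform constants, and that the lower-order contributions truly vanish in the distributional limit, is the technical hinge; once this is in place the remainder of the argument is a line-by-line transcription of the Lipschitz-domain proof.
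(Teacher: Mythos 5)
Your proposal is correct and follows essentially the same route as the paper: the proof of Theorem \ref{t:genrhs} is a transcription of the Theorem \ref{t:mainrhs} blow-up/contradiction argument using Lemmas \ref{l:3control}, \ref{l:3conv}, \ref{l:3contr}, and your key structural point is exactly the one the paper singles out, namely that the rescaled operator $\mathcal{L}_r f=(a^{ij}(rx)f_i)_j+rb^i(rx)f_i+r^2c(rx)f$ loses its lower-order terms in the limit, so Lemma \ref{l:genliousville} and Theorem \ref{t:logstop} (stated only for operators without lower-order terms) apply to the limiting functions. One small slip to fix: the $\mathcal{L}$-solution with the same boundary data is \emph{dominated by} the supersolution $u$ (not the other way around, since $\mathcal{L}u\le 0$), and that is precisely the direction needed to conclude $v\le Cu$ from the bound against the replacement.
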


    \begin{proof}
      The proof proceeds exactly as the proof of Theorem \ref{t:mainrhs}. We only highlight how the lower order terms vanish in the blow-up regime. 
       The rescaled functions 
      \[
        w_{r,k}(x) := \frac{v_k(rx)-\frac{v_k(r y_k)}{u_k(r y_k)}u_k(rx)}{
    \sup_{B_1 \cap \mathcal{C}} |v_k(rx)-\frac{v_k(ry_k)}{u_k(ry_k)}u_k(rx)|},
      \]
  satisfy 
  \begin{equation}  \label{e:lastdiv}
  \mathcal{L}_r w_{r,k} = \frac{r^2\mathcal{L} v_k(rx)}{
    \sup_{B_1 \cap \mathcal{C}} |v_k(rx)-\frac{v_k(ry_k)}{u_k(ry_k)}u_k(rx)|},
  \end{equation}
   where for a function $f$ we have 
      \[
       \mathcal{L}_r f = (a^{ij}(rx)f_i)_j + rb^{i}(rx)f_i + r^2c(rx)f. 
      \]
      Thus, in the blow-up regime the lower order terms disappear.
      As in the proof of Theorems \ref{t:cone} and  \ref{t:mainrhs} we may bound the denominator in \eqref{e:lastdiv} from below by $r_k^{\alpha} [\ln(1/r_k)]^2$ for 
        the constructed sequence of $r_k \to 0$. Using that the numerator is bounded from above by $r_k^{2-\gamma}$, we have that in the blow-up regime the right hand side
       and lower order terms vanish.  We then apply Lemma \ref{l:genliousville} and Theorem \ref{t:logstop} as in the proof of Theorem \ref{t:mainrhs}. 
    \end{proof} 

\appendix

\section{ \ }
\begin{lemma}   \label{l:sumdiv}
 Let $\{a_k\}$ be a sequence with $a_k \geq 0$,  and with infinitely many terms nonzero. Further assume that $\sum_{k=1}^{\infty} a_k = +\infty$. 
 Then there exists a subsequence $a_{k_l}$ such that for any $j \in \mathbb{N}$, 
 \begin{equation} \label{e:sumdiv}
  \limsup_{k_l \to \infty} \frac{\sum_{i=1}^j a_{k_l -i}}{a_{k_l}}  \leq j. 
 \end{equation}
\end{lemma}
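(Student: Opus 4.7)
The plan is to reduce the lemma to a sharper ``approximate local maximum'' property: for every $l \in \mathbb{N}$ and every $\epsilon \in (0,1)$, the set of indices $k$ with $a_k > 0$ and $a_k \geq (1-\epsilon)\, a_{k-i}$ for all $i = 1, \ldots, l$ is infinite. Given this, I would construct the desired subsequence by a simple diagonal, taking $k_l > k_{l-1}$ in the set corresponding to the parameters $(l, 1/l)$. Then for any fixed $j \in \mathbb{N}$ and all $l \geq j$ the inequality applies to each $i \leq j$, giving
\[
\frac{\sum_{i=1}^{j} a_{k_l - i}}{a_{k_l}} \;\leq\; \frac{j\,l}{l-1},
\]
which tends to $j$ as $l \to \infty$, yielding the desired $\limsup \leq j$.

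To establish the auxiliary claim I would argue by contradiction. If it fails, there exist $l, \epsilon, N$ such that for every $k \geq N$ with $a_k > 0$ some $i \in \{1, \ldots, l\}$ satisfies $a_{k-i} > c\, a_k$, where $c := 1/(1-\epsilon) > 1$. Introducing the sliding maximum $M_k := \max_{k-l+1 \leq j \leq k} a_j$, this reads $a_k < M_{k-1}/c$ for all $k \geq N$ (including trivially when $a_k = 0$). Because the new entry is strictly below the old window maximum, adding it cannot raise $M$, so $M_k \leq M_{k-1}$ for $k \geq N$, and $M_k \downarrow L$ for some $L \geq 0$. A short argument rules out $L > 0$: from $a_k < M_{k-1}/c \to L/c < L$ every entry of the window is eventually below some $L' < L$, so $M_k < L$ for $k$ large, contradicting $M_k \geq L$. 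Hence $M_k \to 0$.

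The crucial step is to upgrade the one-step bound to an $l$-step contraction: for each $j$ in the window defining $M_k$ (with $k \geq N + l$), monotonicity of $M$ gives $a_j \leq M_{j-1}/c \leq M_{k-l}/c$, whence $M_k \leq M_{k-l}/c$. This forces geometric decay of $M_k$ in blocks of length $l$, so $\sum_k M_k < \infty$, and since $a_k \leq M_k$ we conclude $\sum a_k < \infty$, contradicting the hypothesis. The main obstacle is precisely this amplification: mere monotonicity $M_k \leq M_{k-1}$ is not enough to force $\sum a_k < \infty$ (the constant sequence $a_k \equiv 1$ has $M_k$ constant yet $\sum a_k = \infty$), and leveraging the full window length to extract a uniform contraction factor $1/c$ is what converts $\sum a_k = \infty$ into a contradiction.
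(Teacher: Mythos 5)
Your proof is correct, and it takes a genuinely different route from the paper's. The paper argues by cases on $\limsup a_k$: the cases $\limsup a_k=\infty$ and $0<\limsup a_k<\infty$ are handled directly by choosing near-maximal terms, and in the remaining case $a_k\to 0$ it builds, through two successive subsequences of running maxima, a positive, decreasing, convex majorant $g$ with $g(k)\ge a_k$ and $g(k_l)=a_{k_l}$, then deduces $\lim g(k-1)/g(k)\le 1$ from the divergence of $\sum g(k)$ via the ratio test, which telescopes to \eqref{e:sumdiv} along the touching subsequence. You avoid both the case split and the envelope construction: your dichotomy says that for each window length $l$ and tolerance $\epsilon$ either infinitely many indices are $(1-\epsilon)$-approximate maxima of the preceding $l$ terms, or else the sliding maximum $M_k=\max_{k-l+1\le j\le k}a_j$ is eventually non-increasing and satisfies the block contraction $M_k\le M_{k-l}/c$ (your amplification step $a_j\le M_{j-1}/c\le M_{k-l}/c$ for every $j$ in the window is the crux, and it is correct), which forces $\sum a_k\le\sum M_k<\infty$ and contradicts the hypothesis; the diagonal choice $k_l\in S_{l,1/l}$ then gives $\sum_{i=1}^{j}a_{k_l-i}\le \tfrac{jl}{l-1}\,a_{k_l}$ for $l\ge j$, hence the $\limsup$ bound. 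Both proofs ultimately rest on the same principle—a divergent series cannot have every term beaten by a fixed factor by a nearby earlier term—but yours implements it directly on the sequence, while the paper routes it through convexity and the ratio test; yours is more self-contained and needs no case analysis, while the paper's yields a concrete subsequence (the contact points of the convex decreasing majorant) and an actual limit along it. Three harmless points: state the one-step bound as $a_k\le M_{k-1}/c$ (equality can occur when $M_{k-1}=0$), which is all you use; take, say, $\epsilon=1/(l+1)$ so that $\epsilon\in(0,1)$ also when $l=1$; and the intermediate argument that $M_k\to 0$ is superfluous once you have the block contraction, since geometric decay already gives summability.
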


\begin{proof}
 If $\limsup a_k = \infty$, one may simply choose a subsequence $a_{k_l}$ such that $a_{k_l -i} \leq a_{k_l}$ for any $0<i<k_l$ and the result immediately follows. 
 If $0<\limsup a_k<\infty$, one may choose a subsequence $a_{k_l}$ such that $\lim a_{k_l}=\limsup a_k$, and the result also follows. 
 
 We now consider the most difficult case when $\limsup a_k =0$. Define $f_1(k)=a_k$ for $k \in \mathbb{N}$, and interpolate linearly between integers for any value $x \geq 1$. Note that $\lim_{x \to \infty} f_1(x)=0$.  
 Let $k_1 = \max \{j \in \mathbb{N} \mid a_j = \max \{a_k\}\}$. We inductively choose 
 \[
 k_{l+1}=\max \{j > k_l \mid a_j = \max\{a_k\}_{k=k_l +1}^{\infty}\}.
 \]
  We define $f_2(k_l)=a_{k_l}$ and interpolate linearly in between values of the subsequence $\{k_l\}$. Then $f_2(x)$
 is strictly decreasing, and $f_2(k) \geq a_k$ for $k \geq k_1$. 
 
 We now use an inductive procedure to construct $f_3(x)$ which will be strictly decreasing and convex.  We choose a further subsequence by letting 
 $k_{l_1} = k_1$. If $k_{l_i}$ has been chosen, then we define $f_3$  by letting $f_3(k_{l_j}) =a_{k_{l_j}}$ for $1\leq j \leq i$ and linearly 
 interpolating for all other values $k_{l_1}\leq x \leq k_{l_i}$. We choose  
 \[
  k_{l_{i+1}} = \min \left\{k_j \mid k_j > k_{l_i} \text{ and } \frac{f_3(k_{l_i}-1)-f_3(k_{l_i})}{2} + \frac{f_2(k_j)-f_2(k_{l_i})}{2(k_j-k_{l_i})} \geq 0 \right\}.
 \]
 Such a minimum will exist since $f_3(x)$ is strictly decreasing on $k_{l_1}\leq x \leq k_{l_i}$. Then $f_1(x)\leq f_2(x)\leq f_3(x)$, and $f_3(x)$ is strictly decreasing, convex, and 
 $f_3(k_{l_i}) = a_{k_{l_i}}$. 
  For convenience we relabel $f_3(x)=g(x)$ and relabel our subsequence $k_{l_i}$ to be $k_{l}$.

Since $g > 0$, decreasing, and convex we may take a smooth approximation $g_{\epsilon}(x)$ such that 
$g_{\epsilon} \geq 0, g_{\epsilon}' \leq 0, $ and $g_{\epsilon}''\geq0$ it follows that 
$g_{\epsilon}'(x+1)g_{\epsilon}(x)\leq g_{\epsilon}'(x)g_{\epsilon}(x)\leq g_{\epsilon}'(x)g_{\epsilon}(x+1)$ so that
\[
 \frac{d}{dx}\left[ \frac{g_{\epsilon}(x+1)}{g_{\epsilon}(x)}\right] \leq 0. 
\]
 Then $g(x+1)/g(x)$ is a decreasing function, so that $\lim_{x \to \infty} g(x+1)/g(x)$ exists. 
 
 Since $g(k) \geq a_k$ and the series $\sum a_k$ diverges, then the series $\sum g(k)$ diverges. By the ratio test it follows that 
 \begin{equation}   \label{e:ratiotest}
  \lim_{k \to \infty} \frac{g(k-1)}{g(k)} \leq 1. 
 \end{equation}
 This proves 
 \[
  \lim_{k \to \infty} \frac{\sum_{i=1}^j g(k -i)}{g(k)}  \leq j. 
 \]
 when $j=1$. By induction we assume it holds true for $j$. Then 
 \[
 \begin{aligned}
  \lim_{k \to \infty}\frac{\sum_{i=1}^{j+1} g(k-i)}{g(k)} &= \lim_{k \to \infty} \frac{g(k-{j+1})}{g(k)} +   \frac{\sum_{i=1}^{j} g(k-i)}{g(k)} \\
            &= \lim_{k \to \infty} \prod_{i=1}^{j+1} \frac{g(k-i)}{g(k-(i-1))} + \frac{\sum_{i=1}^{j} g(k-i)}{g(k)}.
  \end{aligned}
 \]
 Then by \eqref{e:ratiotest} as well as the induction hypothesis we conclude that 
 \[
  \lim_{k \to \infty} \frac{\sum_{i=1}^{j+1} g(k-i)}{g(k)} \leq 1 + j.  
 \]
 Finally, we use that $a_{k_l}=g(k_l)$ and $a_{k_{l} - i} \leq g(k_l -i)$ to conclude \eqref{e:sumdiv}. 
\end{proof}

\bibliographystyle{amsplain}
\bibliography{refCorner}

\end{document}